\documentclass[12pt]{amsart}
\usepackage{a4wide,enumerate,xcolor}
\usepackage{amsmath,graphicx}
\allowdisplaybreaks

\let\pa\partial
\let\na\nabla
\let\eps\varepsilon
\newcommand{\N}{{\mathbb N}}
\newcommand{\R}{{\mathbb R}}
\newcommand{\diver}{\operatorname{div}}

\newtheorem{theorem}{Theorem}
\newtheorem{lemma}[theorem]{Lemma}

\newtheorem{remark}[theorem]{Remark}


\begin{document}

\title[Analysis of a Poisson--Nernst--Planck system]{Analysis of a Poisson--Nernst--Planck \\ cross-diffusion system with steric effects}

\author[P. Hirvonen]{Peter Hirvonen}
\address{Institute of Analysis and Scientific Computing, TU Wien, Wiedner Hauptstra\ss e 8--10, 1040 Wien, Austria}
\email{peter.hirvonen@tuwien.ac.at} 

\author[A. J\"ungel]{Ansgar J\"ungel}
\address{Institute of Analysis and Scientific Computing, TU Wien, Wiedner Hauptstra\ss e 8--10, 1040 Wien, Austria}
\email{juengel@tuwien.ac.at} 

\date{\today}

\thanks{The authors acknowledge partial support from   
the Austrian Science Fund (FWF), grants P33010 and F65.
This work has received funding from the European 
Research Council (ERC) under the European Union's Horizon 2020 research and innovation programme, ERC Advanced Grant NEUROMORPH, no.~101018153.} 

\begin{abstract}
A transient Poisson--Nernst--Planck system with steric effects is analyzed in a bounded domain with no-flux boundary conditions for the ion concentrations and mixed Dirichlet--Neumann boundary conditions for the electric potential. The steric repulsion of ions is modeled by a localized Lennard--Jones force, leading to cross-diffusion terms. The existence of global weak solutions, a weak--strong uniqueness property, and, in case of pure Neumann conditions, the exponential decay towards the thermal equilibrium state is proved. The main difficulties are the cross-diffusion terms and the different boundary conditions satisfied by the unknowns. These issues are overcome by exploiting the entropy structure of the equations and carefully taking into account the electric potential term. A numerical experiment illustrates the long-time behavior of the solutions when the potential satisfies mixed boundary conditions.
\end{abstract}

\keywords{Poisson--Nernst--Planck equations, steric effects, cross diffusion, entropy method, existence analysis, weak--strong uniqueness, long-time behavior of solutions.}  
 
\subjclass[2000]{35K51; 35A02, 35B40, 35Q92.}

\maketitle


\section{Introduction}

The transport of ions is often modeled by the Nernst--Planck equations for the ion concentrations, self-consistently coupled with the Poisson equation for the electric potential \cite{Ner89,Pla90}. The Nernst--Planck theory is valid for dilute solutions only. In confined environments, the ions become crowded and steric repulsion may appear due to finite ion sizes. In such a situation, the Nernst--Planck equations need to be modified. The mutual repulsive force can be described by the nonlocal Lennard--Jones force term \cite{HLLE12}. The localization limit leads to easier local interatomic forces. Then the free energy of the system consists of the local Lennard--Jones energy, the thermodynamic (Boltzmann) entropy, and the electric energy \cite{LiEi14}. In this paper, we analyze the associated Euler--Lagrange equations in a bounded domain with biologically motivated boundary conditions. We show the existence of global weak solutions, prove a weak--strong uniqueness property, show the exponential decay of the solutions towards thermal equilibrium, and present a numerical example.

The Nernst--Planck equations with steric effects for the ion concentrations $u_i$ are given by 
\begin{align}\label{1.eq}
  \pa_t u_i + \diver J_i = 0, \quad
  J_i = -\big(\sigma\na u_i + z_iu_i\na\Phi + u_i\na p_i(u)\big)
  \quad\mbox{in }\Omega,\ t>0,
\end{align}
where $\Omega\subset\R^d$ ($d\ge 1$) is a bounded domain with Lipschitz boundary and
\begin{align}\label{1.p}
  p_i(u) = \sum_{j=1}^n a_{ij}u_j, \quad i=1,\ldots,n,
\end{align}
describes the potential associated to the $i$th species arising from steric effects. The parameters are the diffusion coefficient $\sigma>0$, the ionic charges $z_i\in\R$, and the values $a_{ij}\ge 0$ represent the strength of steric repulsion between the species. To ensure the parabolicity of the system, we suppose that the matrix $(a_{ij})_{i,j=1}^n$ is symmetric and positive definite. We impose the initial and boundary conditions
\begin{align}\label{1.bic}
  u_i(0)=u_i^0 \quad\mbox{in }\Omega, \quad 
  J_i\cdot\nu = 0 \quad\mbox{on }\pa\Omega,\ t>0,\ i=1,\ldots,n,
\end{align}
where $\nu$ is the exterior unit normal vector to $\pa\Omega$. The electric potential $\Phi$ solves the Poisson equation with mixed Dirichlet--Neumann boundary conditions:
\begin{align}\label{1.Phi}
  -\Delta\Phi = \sum_{i=1}^n z_iu_i\quad\mbox{in }\Omega, \quad
  \Phi=\Phi_D\quad\mbox{on }\Gamma_D, \quad
  \na\Phi\cdot\nu = 0 \quad\mbox{on }\Gamma_N,
\end{align}
where $\Gamma_D\cup\Gamma_N=\pa\Omega$ and $\Gamma_D\cap\Gamma_D=\emptyset$. The right-hand side of \eqref{1.Phi} is the total ion charge. The no-flux boundary conditions mean that the ions cannot leave the domain, while the electric potential is assumed to be fixed on $\Gamma_D$, with $\Phi_D$ being the applied potential, and $\Gamma_N$ models insulating boundary parts.

\subsection{State of the art}

Without drift terms, equations \eqref{1.eq}--\eqref{1.p} have been rigorously derived from moderately interacting particle systems in a mean-field-type limit \cite{CDJ19}. A heuristic derivation including drift terms with a fixed potential for two species was presented in \cite{GaSe14} together with a global existence analysis. The Poisson--Nernst--Planck model \eqref{1.eq}--\eqref{1.p} was analyzed in \cite{Hsi19} for two species and (biologically less realistic) homogeneous Dirichlet boundary conditions for the electric potential. Equations \eqref{1.eq} for an arbitrary number of species and without potential terms were investigated in \cite{JPZ22}, but the existence proof was only sketched. An existence proof based on a finite-volume approximation for the same situation was presented in \cite{JuVe24}. An analysis of the $n$-species model \eqref{1.eq}--\eqref{1.Phi} is missing in the literature. In this paper, we fill this gap.

There exist other Nernst--Planck models modeling steric effects. In \cite{SWZ18}, the special case $a_{ij}=\delta_{ij}a_i$ ($\delta_{ij}$ is the Kronecker delta) was considered. This choice leads to nonlinear diffusion but does not contain cross-diffusion terms. Including the thermodynamic energy for the solvent in the free energy, the current densities become \cite{KBA07}
\begin{align}\label{1.solv}
  J_i = -\big(\sigma\na u_i - u_i\na\log u_0 + z_iu_i\na\Phi\big),
  \quad i=1,\ldots,n,
\end{align}
where $u_0=1-\sum_{j=1}^n u_j$ is the solvent concentration. In this model, the quantities $u_i$ are mass fractions, and their sum equals one. The corresponding Poisson--Nernst--Planck model was analyzed in \cite{JuMa24}, where the Poisson equation was replaced by the fourth-order Fermi--Poisson model \cite{SNE01}. Another approach is to allow for mobilities that depend on the solvent concentration, leading to
\begin{align}\label{1.solv2}
  J_i = -\big(\sigma u_0\na u_i - u_i\na u_0 
  + z_iu_0u_i\na\Phi\big), \quad i=1,\ldots,n,
\end{align}
which avoids the logarithmic term but involves the degenerate diffusion $u_0\na u_i$; we refer to \cite{BDPS10,SLH09} for a formal derivation and to \cite{GeJu18} for an existence analysis. The systems with fluxes \eqref{1.solv} or \eqref{1.solv2} yield a maximal concentration of ions, $u_i\le 1$. It is argued in \cite{Gil15} that local concentrations can be large. According to \cite[Sec.~5]{Gil15}, this does not contradict the intuition that the percentage of space occupied by the ions cannot exceed one, since we are averaging the concentrations over some small  neighborhood which keeps the local packing fraction below one. 

Finally, we mention the approach of \cite{Gav20}, where Nernst--Planck models with steric effects are derived by taking into account higher-order approximations of the Lennard--Jones term, leading to higher-order derivatives, which need to be added to equations \eqref{1.eq}. 

In all the mentioned existence results, the matrix $(a_{ij})$ is required to be positive definite. If the real part of at least one eigenvalue becomes negative for some value of the concentrations, and equations \eqref{1.eq} fail to be parabolic in the sense of Petrovskii, which is a minimal condition for local existence of solutions. In this situation, the problem is ill-posed \cite{Gav18}, and $L^\infty(\Omega)$ blow-up in finite time is possible \cite[Theorem 7]{GHLP23}. If the matrix is positive semidefinite, system \eqref{1.eq} is generally of hyperbolic--parabolic type. The case $a_{ij}=a>0$ and $n=2$ (with fixed potential terms) was investigated in \cite{BGHP85,GaSe14}. Formulated as a Lagrangian free-boundary problem, the two-species model may have more than one solution \cite{GSV15}. Without potential terms and for positive initial total concentrations, but for an arbitrary number of species, the existence and uniqueness of smooth solutions to the multi-species problem was proved in \cite{DrJu20}, while global dissipative measure-valued solutions were shown to exist in \cite{HoJu23}. We discuss the special case of rank-one matrices $(a_{ij})$ in Remark \ref{rem.one}.

The uniqueness of weak solutions to cross-diffusion systems is generally a delicate problem. Often, only the weak--strong uniqueness property is proved. This means that any weak solution coincides with a
strong solution with the same initial data, as long as the strong solution exists. This property was proved for, e.g., the Shigesada--Kawasaki--Teramoto population model \cite{ChJu19}, for a thin-film solar cell model \cite{BBEP20,HoBu22}, for Maxwell--Stefan systems \cite{HJT22}, and for fractional cross-diffusion equations \cite{DeZa24}, using the relative entropy method. Here, we extend this technique to equations \eqref{1.eq}--\eqref{1.p}. 

\subsection{Entropy structure and main difficulties}

The analytical results are based on variants of the entropy method. The entropy structure follows from the free energy, consisting of the thermodynamic (Boltzmann) entropy, the electric energy, and the mixing (Rao) entropy: 
\begin{align}\label{1.H}
  & H_{BR}(u) = \int_\Omega h_{BR}(u)dx, \quad\mbox{where} \\
  & h_{BR}(u) = \sigma\sum_{i=1}^n u_i(\log u_i-1) 
  + \frac12|\na(\Phi-\Phi_D)|^2
  + \frac12\sum_{i,j=1}^n a_{ij}u_iu_j
  + \sum_{i=1}^n z_iu_i\Phi_D. \nonumber 
\end{align}
We call $H_{BR}(u)$ the (mathematical) Boltzmann--Rao entropy. The last term compensates the Dirichlet boundary condition for the potential. System \eqref{1.eq}--\eqref{1.p} can be written as the formal gradient-flow equations
\begin{align*}
  \pa_t u_i = \diver\bigg(u_i\na\frac{\delta H_{BR}}{\delta u_i}\bigg),
  \quad i=1,\ldots,n,
\end{align*}
where $\delta H_{BR}/\delta u_i=\sigma\log u_i + z_i\Phi + p_i(u)$ is the variational derivative of $H_{BR}$ with respect to $u_i$ \cite[Lemma 7]{GeJu18}. This structure reveals that (see Theorem \ref{thm.ex} for a proof)
\begin{align*}
  \frac{d}{dt}H_{BR}(u) + \frac12\sum_{i=1}^n\int_\Omega 
  u_i|\na w_i|^2 dx \le 0,
\end{align*}
where $w_i=\delta H_{BR}/\delta u_i$ is called the entropy variable (electro-chemical potential in thermodynamics). To derive uniform gradient bounds, we need to estimate the entropy production term $\int_\Omega u_i|\na w_i|^2 dx$. We show in Lemma \ref{lem.ep} that 
\begin{align*}
  \sum_{i=1}^n\int_\Omega u_i|\na w_i|^2 dx
  \ge \sigma\sum_{i=1}^n\int_\Omega\big(4\sigma|\na\sqrt{u_i}|^2 
  + \alpha|\na u_i|^2\big)dx - C\sigma(H_{BR}(u)+1)
\end{align*}
for some constants $\alpha$, $C>0$. This is the key estimate for the existence analysis. Note that we lose the gradient bounds if $\sigma=0$. The entropy inequality yields an $L^\infty(0,T;L^2(\Omega))\cap L^2(0,T;H^1(\Omega))$ bound for $u_i$, an $L^\infty(0,T;H^1(\Omega))$ bound for $\Phi$, and an $L^{2}(0,T;L^{2}(\Omega))$ bound for $\sqrt{u_i}\na w_i$, which is sufficient for the existence analysis.

In the further analysis, we face two main difficulties. First, in the weak--strong uniqueness proof, to avoid issues with the logarithm, we do not use the Boltzmann--Rao entropy \eqref{1.H} but the Rao-type entropy
\begin{align*}
  H_R(u) = \int_\Omega h_{R}(u)dx, \quad
  h_R(u) = \frac12|\na(\Phi-\Phi_D)|^2
  + \frac12\sum_{i,j=1}^n a_{ij}u_iu_j 
  + \sum_{i=1}^n z_iu_i\Phi_D.
\end{align*}
The time derivative of $H_R$ is formally computed by taking the test function $\phi_i=p_i(u)+z_i(\Phi-\Phi_D)$. Unfortunately, this function is not admissible since we need the regularity $\phi_i\in L^2(0,T;W^{1,4}(\Omega))$ in the weak formulation. This difficulty is overcome by extending the test function space and exploiting the regularity $\sqrt{u_i}\na w_i\in L^2(\Omega_T)$; see Section \ref{sec.ei} for details. Then the proof is based on the relative entropy 
\begin{align}\label{1.relH}
  H_{R}(u|\bar{u}) = \int_\Omega\big(h_R(u) - h_R(\bar{u}) 
  - h_R'(\bar{u})\cdot(u-\bar{u})\big)dx,
\end{align}
where $(u,\Phi)$ is a weak solution and $(\bar{u},\bar\Phi)$ is a strong solution. The goal is to derive an inequality of the form
\begin{align}\label{1.dHRdt}
  \frac{d}{dt}H_{R}(u|\bar{u}) \le CH_{R}(u|\bar{u}) 
  \quad\mbox{for }t>0,
\end{align}
where $C>0$ is some constant. The difficulty is to make this inequality rigorous. For this, we write the relative entropy as the sum of $H_R(u)$, $H_R(\bar{u})$, and some remainder terms. The solutions $u$ and $\bar{u}$ satisfy a Rao entropy inequality, while the remainder terms include the strong solution, which facilitates the computation of these expressions. Combining the estimations, we arrive eventually at \eqref{1.dHRdt}. 

The long-time behavior can be proved only if the stationary solution $(u^\infty,\Phi^\infty)$ is in thermal equilibrium, which means that the stationary entropy variable $w_i^\infty=\sigma\log u_i^\infty+z_i\Phi^\infty+p_i(u^\infty)$ vanishes. The relative entropy reads as
\begin{align}
  H_{BR}(u|u^\infty) &= \int_\Omega\big(h_{BR}(u) - h_{BR}(u^\infty)
  - h'_{BR}(u^\infty)\cdot(u-u^\infty)\big)dx. \label{1.Hinfty}
\end{align}
To prove exponential decay, we need an estimate like \eqref{1.dHRdt} but with $C<0$ to achieve an exponential decay rate via Gronwall's lemma. The fact that the boundary conditions of $u_i$ and $\Phi$ are different leads to the second difficulty. Indeed, computing the time derivative of the relative entropy, we find that
\begin{align*}
  \frac{d}{dt}H_{BR}(u|u^\infty) \le -\sum_{i=1}^n\int_\Omega
  u_i|\na(w_i-w_i^\infty)|^2 dx,
\end{align*}
and when expanding the square of $\na w_i$, the most delicate term is
\begin{align*}
  I := 2\sigma\sum_{i=1}^n\int_\Omega 
  z_i\na (u_i-u_i^\infty)\cdot\na(\Phi-\Phi^\infty)dx.
\end{align*}
The positive definiteness of $(a_{ij})$ provides the term $\|\na(u_i - u_i^\infty)\|_{L^2(\Omega)}^2$ with a good sign. Thus, we may apply Young's inequality for $I$ to absorb $\|\na(u_i - u_i^\infty)\|_{L^2(\Omega)}^2$, but the expression $\|\na(\Phi-\Phi^\infty)\|_{L^2(\Omega)}^2$ contributes to the right-hand side with the wrong sign. Another idea is to integrate by parts and to use the Poisson equation as well as the boundary condition $\na\Phi\cdot\nu=\na\Phi^\infty\cdot\nu=0$ on $\Gamma_N$:
\begin{align*}
  I = -2\sigma\int_\Omega \bigg(\sum_{i=1}^n z_i(u_i-u_i^\infty)
  \bigg)^2 dx
  + 2\sigma\sum_{i=1}^n\int_{\Gamma_D} (u_i-u_i^\infty)
  \na(\Phi-\Phi^\infty)\cdot\nu dx.
\end{align*}
The boundary integral would vanish if $u_i$ and $u_i^\infty$ were satisfying the same Dirichlet boundary conditions. This situation holds true in semiconductor applications, where the long-time behavior of solution could be shown \cite{Gaj85,GlHu97}. Unfortunately, this is not the case here. To overcome this issue, we assume that $\Phi$ satisfies Neumann conditions on the whole boundary, i.e.\ $\Gamma_D=\emptyset$. Then the delicate integral $I$ is nonpositive and we can conclude the argument. 

\subsection{Main results}

We impose the following assumptions:

\begin{itemize}
\item[(A1)] Domain: $\Omega\subset\R^d$ ($1\le d\le 4$) is a bounded domain with Lipschitz boundary $\pa\Omega=\Gamma_D\cup\Gamma_N$, where $\Gamma_D\cap\Gamma_N=\emptyset$ and $\Gamma_N$ is open in $\pa\Omega$.
\item[(A2)] Data: $T>0$, $\sigma>0$, $z_i\in\R$ for $i=1,\ldots,n$, and $(a_{ij})_{i,j=1}^n\subset\R^{n\times n}$ is symmetric and positive definite.
\item[(A3)] Initial and boundary data: $u_i^0\in L^2(\Omega)$ satisfies $u_i^0>0$ in $\Omega$. If $\pa\Omega=\Gamma_N$, we require that $\sum_{i=1}^n \int_\Omega z_iu_i^0 dx=0$.
\item[(A4)] Boundary data I: The boundary function $\Phi_D$ on $\Gamma_D$ can be extended to a function in $H^1(\Omega)\cap L^{\infty}(\Omega)$ satisfying $\Delta\Phi_D=0$ in $\Omega$ and $\na\Phi_D\cdot\nu=0$ on $\Gamma_N$.
\item[(A5)] Boundary data II: The solution $\phi$ to 
\begin{align*}
  -\Delta\phi=f\in L^2(\Omega)\quad\mbox{in }\Omega, 
  \quad \phi=\Phi_D\quad\mbox{on }\Gamma_D, \quad
  \na\phi\cdot\nu=0\quad\mbox{on }\Gamma_N
\end{align*}
satisfies $\phi\in H^2(\Omega)$ and $\|\phi\|_{H^2(\Omega)}\le C(\|f\|_{L^2(\Omega)}+1)$, where $C>0$ depends on $\Phi_D$.
\end{itemize}

Assumption (A1) includes the case of homogeneous Neumann boundary conditions, $\pa\Omega=\Gamma_N$. The restriction to $d\le 4$ space dimensions is due to Sobolev embeddings. It can be removed in the existence analysis at the expense of lower regularity of $u_i$. The positive definiteness of $(a_{ij})$ in Assumption (A2) helps us to derive $H^1(\Omega)$ estimates for the solution. We discuss a special case of positive semidefinite matrices (namely rank-one matrices) in Remark \ref{rem.one} and the case $\sigma=0$ in Remark \ref{rem.sigma}. Assumption (A4) is needed to define the Boltzmann--Rao entropy. The condition that $\Phi_D$ satisfies an elliptic problem is needed to compute the entropy variables \cite[Lemma 7]{GeJu18}. Finally, Assumption (A5) is only needed for the weak--strong uniqueness and exponential decay results. Assumption (A5) is a regularity requirement for the solutions to the mixed Dirichlet--Neumann boundary-value problem; they are satisfied if, for instance, the boundaries $\Gamma_D$ and $\Gamma_N$ do not meet and the boundary data is sufficiently smooth \cite{Tro87}.

We use the following notation. Set $\Omega_T:=\Omega\times(0,T)$. The norm $\|u\|_X$ of a vector-valued function $u=(u_1,\ldots,u_n)$ in some Banach space $X$ is defined as $\|u\|_X=\sum_{i=1}^n\|u_i\|_X$. Furthermore, we introduce the Hilbert space
\begin{align*}
  H_D^1(\Omega) = \{v\in H^1(\Omega):v=0\mbox{ on }\Gamma_D\}.
\end{align*}
Our first main result concerns the existence of global weak solutions. 

\begin{theorem}[Global existence]\label{thm.ex}
Let Assumptions (A1)--(A4) hold. Then there exists a weak solution $u=(u_1,\ldots,u_n)$ to \eqref{1.eq}--\eqref{1.Phi} satisfying $u_i\ge 0$ in $\Omega_T$,
\begin{align*}
  \sqrt{u_i},\,u_i,\,\Phi\in L^2(0,T;H^1(\Omega)), \quad 
  \pa_t u_i\in L^{2}(0,T;W^{1,4}(\Omega)'), \quad i=1,\ldots,n,
\end{align*}
and the initial data is satisfied in the sense of $W^{1,4}(\Omega)'$. If $\pa\Omega=\Gamma_N$, the potential satisfies $\int_\Omega \Phi dx = 0$. Moreover, the following entropy inequality holds:
\begin{align}\label{1.ei}
  H_{BR}(u(t)) + \int_0^t\int_\Omega u_i|\na w_i|^2 dxds 
  \le H_{BR}(u^0),
\end{align}
where $w_i = \sigma\log u_i + z_i\Phi + p_i(u)$ on $\{u_i>0\}$ and $w_i=0$ on $\{u_i=0\}$.
\end{theorem}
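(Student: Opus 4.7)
The approach is the boundedness-by-entropy method of J\"ungel, reformulated in the entropy variables $w_i = \delta H_{BR}/\delta u_i = \sigma\log u_i + z_i\Phi + p_i(u)$, in which the system takes the gradient-flow form $\pa_t u_i = \diver(u_i\na w_i)$. Since $(a_{ij})$ is symmetric positive definite, the map $u\mapsto(\sigma\log u_i+p_i(u))_{i=1,\dots,n}$ is a diffeomorphism from $(0,\infty)^n$ onto $\R^n$, so working with $w$ encodes the positivity $u_i>0$ automatically; the potential $\Phi=\Phi[u]$ is recovered from \eqref{1.Phi}, which is affine in $u$ via the extension of $\Phi_D$ from (A4).

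\textbf{Approximation.} First I would combine an implicit Euler time discretization of step $\tau>0$ with an elliptic regularization in a Hilbert space $V\hookrightarrow L^\infty(\Omega)$ (e.g.\ $V=H^m(\Omega)$ with $m>d/2$). At step $k\ge 1$, given $u^{k-1}$, seek $w^k\in V^n$ solving
\begin{equation*}
  \frac{1}{\tau}\int_\Omega\big(u_i(w^k)-u_i^{k-1}\big)\phi_i\,dx
  + \int_\Omega u_i(w^k)\na w_i^k\cdot\na\phi_i\,dx
  + \varepsilon(w^k,\phi)_V = 0
\end{equation*}
for all $\phi\in V^n$, with $\Phi^k$ determined by \eqref{1.Phi} from $u^k=u(w^k)$. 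Existence of $w^k$ follows from the Leray--Schauder fixed-point theorem applied to the linearized coercive problem, the crucial a priori bound being provided by testing with $\phi=w^k$.

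\textbf{A priori estimates.} Testing with $\phi=w^k$ and telescoping produces the discrete entropy inequality
\begin{equation*}
  H_{BR}(u^k) + \tau\sum_{i=1}^n\int_\Omega u_i^k|\na w_i^k|^2\,dx
  + \varepsilon\tau\|w^k\|_V^2 \le H_{BR}(u^{k-1}).
\end{equation*}
Combined with the entropy production bound of Lemma~\ref{lem.ep}, this yields, for the piecewise-constant-in-time interpolants and uniformly in $(\tau,\varepsilon)$, bounds on $u_i^{(\tau)}$ in $L^\infty(0,T;L^2(\Omega))\cap L^2(0,T;H^1(\Omega))$, on $\sqrt{u_i^{(\tau)}}$ in $L^2(0,T;H^1(\Omega))$, on $\sqrt{u_i^{(\tau)}}\na w_i^{(\tau)}$ in $L^2(\Omega_T)$, and, via the Poisson equation together with (A4), on $\Phi^{(\tau)}$ in $L^\infty(0,T;H^1(\Omega))$. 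Writing the flux as $u_i\na w_i=\sqrt{u_i}\,(\sqrt{u_i}\na w_i)$ and using $W^{1,4}(\Omega)\hookrightarrow L^4(\Omega)$, it is controlled in $L^2(0,T;L^{4/3}(\Omega))$, which bounds the discrete time derivative of $u_i^{(\tau)}$ in $L^2(0,T;W^{1,4}(\Omega)')$.

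\textbf{Passage to the limit and main obstacle.} Choosing, e.g., $\varepsilon=\tau$, a discrete Aubin--Lions lemma gives strong convergence $u_i^{(\tau)}\to u_i$ in $L^2(\Omega_T)$ and a.e., so $u_i\ge 0$ in the limit, and linearity of the Poisson equation yields strong $L^2(0,T;H^1(\Omega))$ convergence of $\Phi^{(\tau)}$. The main obstacle is the limit passage in the nonlinear flux, since $\na w_i^{(\tau)}$ is controlled only through the degenerate weighted quantity $\sqrt{u_i^{(\tau)}}\na w_i^{(\tau)}$ and $\log u_i$ is singular where $u_i$ may vanish. The standard remedy is to extract a weak $L^2(\Omega_T)$ limit $F_i$ of $\sqrt{u_i^{(\tau)}}\na w_i^{(\tau)}$ and to identify $\sqrt{u_i}\,F_i$ with $u_i\na w_i$ by exploiting the a.e.\ pointwise identity $w_i=\sigma\log u_i + z_i\Phi+p_i(u)$ on $\{u_i>0\}$, while $u_i\na w_i=0$ a.e.\ on $\{u_i=0\}$. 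Weak lower semicontinuity of the dissipation and convexity of $h_{BR}$ then yield the entropy inequality \eqref{1.ei}, the equation together with the flux bound recovers the initial data in $W^{1,4}(\Omega)'$, and in the pure Neumann case the compatibility condition from (A3) is preserved at each discrete step, which ensures $\int_\Omega\Phi\,dx=0$ in the limit.
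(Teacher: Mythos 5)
Your proposal is correct and follows essentially the same route as the paper: entropy-variable reformulation with the invertible map $u=F^{-1}(w-z\Phi)$, implicit Euler plus $H^m$-regularization solved by Leray--Schauder, the discrete Boltzmann--Rao entropy inequality combined with the entropy production estimate for uniform bounds, and Aubin--Lions with the flux written as $u_i\na w_i=\sigma\na u_i+z_iu_i\na\Phi+u_i\na p_i(u)$ to avoid the logarithm in the limit passage. The only cosmetic difference is that you take the simultaneous limit $\eps=\tau\to0$, whereas the paper passes $\eps\to0$ first only because it later needs the intermediate Rao entropy inequality; the paper itself notes the simultaneous limit suffices for existence.
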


The theorem is proved by the Leray--Schauder fixed-point argument. For this, we approximate equations \eqref{1.eq} by using an implicit Euler discretization, adding a higher-order regularization, and formulating the equations in terms of the entropy variable $w_i$, as in the boundedness-by-entropy method \cite{Jue15}. The mapping $u\mapsto w$ is invertible and yields positive approximations of the concentrations; see Lemma \ref{lem.u}. Uniform estimates are obtained from the discrete version of the Boltzmann--Rao entropy inequality \eqref{1.ei}; see Lemmas \ref{lem.ei}--\ref{lem.unif}. The Aubin--Lions lemma of \cite{DrJu12} implies the strong convergence of a subsequence of approximating solutions, allowing us to identify the nonlinearities.

\begin{theorem}[Weak--strong uniqueness]\label{thm.wsu}
Let Assumptions (A1)--(A5) hold, let $(u,\Phi)$ be a weak solution and $(\bar{u},\bar\Phi)$ be a strong solution to \eqref{1.eq}--\eqref{1.Phi} in the sense $\na \bar{u}_i$, $\na\bar\Phi\in L^\infty(\Omega_T)$, and both solutions satisfy the entropy inequality \eqref{1.ei}. Then $u(t)=\bar{u}(t)$ and $\Phi(t)=\bar\Phi(t)$ in $\Omega$ for $t>0$.
\end{theorem}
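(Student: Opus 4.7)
The plan is to establish the differential inequality \eqref{1.dHRdt} for the relative Rao entropy $H_R(u|\bar u)$ from \eqref{1.relH} and then apply Gronwall's lemma. Because $h_R$ is quadratic in $u$ (with $\Phi$ depending affinely on $u$ through the Poisson problem), the relative entropy simplifies to
\begin{align*}
H_R(u|\bar u) = \tfrac12\int_\Omega\sum_{i,j=1}^n a_{ij}(u_i-\bar u_i)(u_j-\bar u_j)\,dx + \tfrac12\int_\Omega|\nabla(\Phi-\bar\Phi)|^2\,dx.
\end{align*}
By the positive definiteness of $(a_{ij})$, this controls $\|u-\bar u\|_{L^2(\Omega)}^2$, and since $u(0)=\bar u(0)$ gives $H_R(u|\bar u)(0)=0$, the target inequality will force $u=\bar u$, whence $\Phi=\bar\Phi$ by uniqueness for the Poisson problem.

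I decompose $H_R(u|\bar u) = H_R(u) - H_R(\bar u) - \int_\Omega h_R'(\bar u)\cdot(u-\bar u)\,dx$ and differentiate each piece. For the first two terms, I would derive a Rao entropy inequality by testing the weak formulation of \eqref{1.eq} with $\phi_i^R := p_i(u)+z_i(\Phi-\Phi_D)$ (respectively $\bar\phi_i^R$ for $\bar u$). Using $J_i=-u_i\nabla w_i$ together with the Poisson equation to identify $\partial_t\tfrac12|\nabla(\Phi-\Phi_D)|^2$ yields
\begin{align*}
\frac{d}{dt}H_R(u) \le -\int_\Omega\sum_{i=1}^n u_i\nabla w_i\cdot\nabla\phi_i^R\,dx,
\end{align*}
and analogously for $\bar u$. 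For the cross term, I differentiate $\int_\Omega h_R'(\bar u)\cdot(u-\bar u)\,dx$ using the pointwise equation for $\bar u$ and testing the weak formulation for $u$ against the function $h_R'(\bar u)_i = p_i(\bar u)+z_i\bar\Phi$, which is smooth and hence admissible thanks to the regularity of $(\bar u,\bar\Phi)$. Assembling the three contributions, the quadratic part reorganizes into a nonnegative dissipation term plus remainders bilinear in $u-\bar u$, $\nabla(u-\bar u)$, and $\nabla(\Phi-\bar\Phi)$ paired with the bounded quantities $\bar u$, $\nabla\bar u$, $\nabla\bar\Phi$.

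To close the estimate, I extract a positive multiple of $\|\nabla(u-\bar u)\|_{L^2(\Omega)}^2$ from the dissipation via the positive definiteness of $(a_{ij})$, bound $\|\Phi-\bar\Phi\|_{H^2(\Omega)}$ by $\|u-\bar u\|_{L^2(\Omega)}$ using the elliptic regularity from assumption~(A5), and apply Young's inequality to absorb the remainders into the dissipation and $H_R(u|\bar u)$ itself. This yields \eqref{1.dHRdt}, and Gronwall's lemma concludes.

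The principal obstacle, flagged in the introduction, is making the Rao entropy inequality for the weak solution $u$ rigorous: the natural test function $\phi_i^R$ lies only in $L^2(0,T;H^1(\Omega))$, whereas the weak formulation from Theorem~\ref{thm.ex} is posed in the smaller class $L^2(0,T;W^{1,4}(\Omega))$. I would overcome this by a density argument showing that the weak formulation extends to all $\phi_i\in L^2(0,T;H^1(\Omega))$. The key input is the $L^2(\Omega_T)$ integrability of $u_i\nabla w_i = \sigma\nabla u_i + u_i(\nabla p_i(u)+z_i\nabla\Phi)$: the first summand is directly in $L^2$, and for the second I use $u_i\in L^\infty(0,T;L^2(\Omega))\cap L^2(0,T;H^1(\Omega))$ together with a suitable Sobolev embedding (available since $d\le 4$) to obtain the required integrability of the product. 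Once this extension is justified, the bilinear expansion sketched above becomes rigorous.
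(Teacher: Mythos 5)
Your overall strategy (a differential inequality for $H_R(u|\bar u)$ plus Gronwall) and your observation that the relative entropy collapses to $\frac12\int_\Omega\sum_{i,j}a_{ij}(u_i-\bar u_i)(u_j-\bar u_j)\,dx+\frac12\int_\Omega|\na(\Phi-\bar\Phi)|^2\,dx$ agree with the paper. However, the step you yourself flag as the principal obstacle is resolved incorrectly. You claim the weak formulation extends to test functions $\phi_i\in L^2(0,T;H^1(\Omega))$ because $u_i\na w_i\in L^2(\Omega_T)$. That integrability is not available: $u_i\na p_i(u)$ is a product of $u_i\in L^\infty(0,T;L^2(\Omega))\cap L^2(0,T;L^4(\Omega))$ with $\na u_j\in L^2(\Omega_T)$, which only lands in $L^1(0,T;L^{4/3}(\Omega))$ (even in $d=1$ the time integrability is only $L^1$). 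What the entropy estimates give is $\sqrt{u_i}\na w_i\in L^2(\Omega_T)$, hence $u_i\na w_i\in L^2(0,T;L^{4/3}(\Omega))$ --- which is exactly why the weak formulation is posed against $W^{1,4}(\Omega)$ in the first place. The paper's fix is different: it enlarges the test space only to the weighted space $V_i=\{\psi\in H^1(\Omega):\sqrt{u_i}\na\psi\in L^2(\Omega)\}$, writes the flux pairing as $\int_\Omega\sqrt{u_i}\na w_i\cdot\sqrt{u_i}\na\phi_i\,dx$, and checks that $p_i(u)+z_i\Phi\in V_i$ via the additional entropy-production estimate $\sqrt{u_i}\na p_i(u),\ \sqrt{u_i}\na\Phi\in L^2(\Omega_T)$ (estimate \eqref{2.up}), which is where Assumption (A5) actually enters; moreover this is carried out at the time-discrete level (Lemma \ref{lem.HR}) before passing to the limit, so the chain rules are exact.

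There is a second gap: your decomposition $H_R(u|\bar u)=H_R(u)-H_R(\bar u)-\int_\Omega h_R'(\bar u)\cdot(u-\bar u)\,dx$ carries a sign problem. The theorem assumes only the entropy \emph{inequality} for $\bar u$, so you control $\frac{d}{dt}H_R(\bar u)$ from above; but the minus sign in front of $H_R(\bar u)$ requires a bound from below, i.e.\ an entropy \emph{equality} for the strong solution. The paper sidesteps this by rewriting
\begin{align*}
H_R(u|\bar u)=H_R(u)+H_R(\bar u)-\frac12\sum_{i,j=1}^n\int_\Omega a_{ij}(u_i\bar u_j+\bar u_iu_j)\,dx-\int_\Omega\na(\Phi-\Phi_D)\cdot\na(\bar\Phi-\Phi_D)\,dx
\end{align*}
(Lemma \ref{lem.relHR}), so that both entropies enter with a plus sign and only the mixed terms --- which always contain the Lipschitz strong solution and can therefore be differentiated exactly as admissible test functions --- need to be computed. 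Your version could be repaired by proving the entropy equality for $(\bar u,\bar\Phi)$, but as written the inequality points the wrong way.
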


As explained before, we use the relative entropy \eqref{1.relH} to prove the theorem. Given a weak solution $(u,\Phi)$ and a strong solution $(\bar{u},\bar\Phi)$, we write (see Lemma \ref{lem.relHR})
\begin{align*}
  \frac{d}{dt}H_R(u|\bar{u}) &= \frac{dH_R}{dt}(u) 
  + \frac{dH_R}{dt}(\bar{u}) - \frac12\frac{d}{dt}\sum_{i,j=1}^n
  \int_\Omega a_{ij}(u_i\bar{u}_j+\bar{u}_iu_j)dx \\
  &\phantom{xx}- \frac{d}{dt}\int_\Omega
  \na(\Phi-\Phi_D)\cdot\na(\bar\Phi-\Phi_D)dx.
\end{align*}
Compared to formulation \eqref{1.relH}, it is sufficient to suppose that $\bar{u}$ satisfies the entropy inequality, while the minus sign in \eqref{1.relH} in front of $H_R(\bar{u})$ requires that $\bar{u}$ fulfills the entropy equality. The time derivative of the remaining two integrals in the previous expression can be computed, as $(\bar{u},\bar\Phi)$ is assumed to have sufficient regularity. After some computations, we end up with
\begin{align*}
  \frac{d}{dt}H_R(u|\bar{u}) \le CH_R(u|\bar{u}),
\end{align*}
where $C>0$ depends on the $L^\infty(0,T;W^{1,\infty}(\Omega))$ norms of $\bar{u}$ and $\bar\Phi$. Gronwall's lemma then shows that $u(t)=\bar{u}(t)$ and $\Phi(t)=\bar\Phi(t)$ in $\Omega$ for $t>0$.

Finally, we prove the exponential convergence of the weak solutions to the thermal equilibrium state in case $\pa\Omega=\Gamma_N$. We call $(u^\infty,\Phi^\infty)$ a thermal equilibrium solution if the fluxes vanish, i.e.\ $\na(\sigma\log u_i^\infty + z_i\Phi^\infty + p_i(u^\infty))=0$ in $\Omega$, $i=1,\ldots,n$. A solution is given by $u_i^\infty=\operatorname{meas}(\Omega)^{-1}\int_\Omega u_i^0 dx$, i.e., $u^\infty$ is constant in space. Then
\begin{align*}
  0 = \na\big(\sigma\log u_i^\infty + z_i\Phi^\infty + p_i(u^\infty)\big)
  = z_i\na\Phi^\infty,
\end{align*}
and $\Phi^\infty$ is constant too. Because of the Neumann condition for $\Phi$, we have $\int_\Omega\Phi^\infty dx=0$. This implies that $\Phi^\infty=0$. In the numerical example of Section \ref{sec.num}, we present the convergence to a nonconstant steady state if mixed boundary condition for $\Phi$ are imposed.

\begin{theorem}[Exponential decay]\label{thm.long}
Let Assumptions (A1)--(A5) hold, let $\pa\Omega=\Gamma_N$, and let $(u,\Phi)$ be a weak solution to \eqref{1.eq}--\eqref{1.Phi}. Then there exists a constant $\lambda>0$
such that
\begin{align*}
  \|u(t)-u^\infty\|_{L^2(\Omega)} + \|\na\Phi(t)\|_{L^2(\Omega)}
  \le H_{BR}(u^0|u^\infty) e^{-\lambda\sigma t}, \quad t>0.
\end{align*}
\end{theorem}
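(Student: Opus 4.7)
The plan is to run the entropy method with the Boltzmann--Rao relative entropy $H_{BR}(u|u^\infty)$ from \eqref{1.Hinfty} and to establish a Gronwall-type inequality
\begin{align*}
  \frac{d}{dt}H_{BR}(u|u^\infty)\le -\sigma\lambda H_{BR}(u|u^\infty)
\end{align*}
for some $\lambda>0$ depending only on $\Omega$ and $(a_{ij})$. The pure Neumann structure is crucial: $u^\infty$ is the spatial mean of $u^0$, $\Phi^\infty\equiv 0$, the compatibility condition in (A3) yields $\sum_i z_iu_i^\infty=0$, so that $\na w_i^\infty=0$, and mass conservation makes the linear correction in \eqref{1.Hinfty} integrate to zero.

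\textbf{Dissipation identity and the delicate cross term.} Starting from \eqref{1.ei} and using $\na w_i^\infty=0$, I first obtain $\frac{d}{dt}H_{BR}(u|u^\infty)\le -\sum_{i=1}^n\int_\Omega u_i|\na w_i|^2\,dx$. The identity $\sqrt{u_i}\na w_i=2\sigma\na\sqrt{u_i}+\sqrt{u_i}(z_i\na\Phi+\na p_i(u))$, squared and summed using the symmetry of $(a_{ij})$, gives
\begin{align*}
  \sum_{i=1}^n u_i|\na w_i|^2
  &= 4\sigma^2\sum_{i=1}^n|\na\sqrt{u_i}|^2 + 2\sigma\sum_{i,j=1}^n a_{ij}\na u_i\cdot\na u_j \\
  &\quad + 2\sigma\sum_{i=1}^n z_i\na u_i\cdot\na\Phi + \sum_{i=1}^n u_i|z_i\na\Phi+\na p_i(u)|^2.
\end{align*}
The first, second (by positive definiteness $\ge 2\sigma\alpha\sum_i|\na u_i|^2$), and fourth summands are nonnegative. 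For the third, the absence of a Dirichlet boundary allows a boundary-free integration by parts, and the Poisson equation together with $\sum_i z_iu_i^\infty=0$ gives
\begin{align*}
  2\sigma\sum_{i=1}^n\int_\Omega z_i\na u_i\cdot\na\Phi\,dx
  = 2\sigma\int_\Omega\Big(\sum_{i=1}^n z_i(u_i-u_i^\infty)\Big)^2 dx \ge 0.
\end{align*}
Testing the Poisson equation with $\Phi$ (with $\int_\Omega\Phi\,dx=0$ from Theorem \ref{thm.ex}) and invoking the Poincar\'e inequality produce $\|\na\Phi\|_{L^2(\Omega)}^2\le C\int_\Omega(\sum_i z_i(u_i-u_i^\infty))^2\,dx$, so the cross term controls $\sigma\|\na\Phi\|_{L^2}^2$ up to a constant.

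\textbf{Closing the estimate and conclusion.} Each component of the relative entropy is now absorbed by one piece of the dissipation: (i) a logarithmic Sobolev inequality dominates the Boltzmann part $\sigma\sum_i\int_\Omega[u_i\log(u_i/u_i^\infty)-(u_i-u_i^\infty)]\,dx$ by a constant multiple of $\sigma^2\sum_i\|\na\sqrt{u_i}\|_{L^2}^2$; (ii) the Poincar\'e inequality applied to $u_i-u_i^\infty$ (which has zero mean by mass conservation) together with positive definiteness of $(a_{ij})$ dominates the Rao part $\frac{1}{2}\sum_{i,j}a_{ij}\int_\Omega(u_i-u_i^\infty)(u_j-u_j^\infty)\,dx$ by a constant times $\sigma\sum_{i,j}a_{ij}\int_\Omega\na u_i\cdot\na u_j\,dx$; (iii) the electric part $\frac{1}{2}\|\na\Phi\|_{L^2}^2$ is dominated by the cross-term computed above. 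Summing yields $\sum_i\int_\Omega u_i|\na w_i|^2\,dx\ge\sigma\lambda H_{BR}(u|u^\infty)$, Gronwall's lemma gives $H_{BR}(u(t)|u^\infty)\le H_{BR}(u^0|u^\infty)e^{-\sigma\lambda t}$, and the lower bound $H_{BR}(u|u^\infty)\ge c(\|\na\Phi\|_{L^2}^2+\sum_i\|u_i-u_i^\infty\|_{L^2}^2)$ coming from its electric and Rao parts yields the stated estimate (up to an overall constant).

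\textbf{Main obstacle.} The core difficulty is the sign-indefinite cross term $I$ in the dissipation. Its nonnegativity (and hence the very possibility of closing the contraction) relies simultaneously on pure Neumann boundary conditions so that integration by parts contributes no boundary term, and on the compatibility $\sum_i z_iu_i^\infty=0$ encoded in (A3). As soon as $\Gamma_D\ne\emptyset$, an unsigned boundary integral on $\Gamma_D$ appears (as discussed in the introduction) and cannot be absorbed by the other nonnegative terms, which is precisely why the theorem is confined to $\pa\Omega=\Gamma_N$.
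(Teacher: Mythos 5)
Your proposal is correct and follows essentially the same route as the paper: differentiate $H_{BR}(u|u^\infty)$, expand the entropy production via $\sqrt{u_i}\na w_i=2\sigma\na\sqrt{u_i}+\sqrt{u_i}\na(z_i\Phi+p_i(u))$, exploit the pure Neumann condition and the Poisson equation to give the cross term a good sign and let it control $\|\na\Phi\|_{L^2}^2$, and absorb the Boltzmann and Rao parts by the logarithmic Sobolev and Poincar\'e--Wirtinger inequalities before applying Gronwall. Your explicit remarks on the roles of $\sum_i z_iu_i^\infty=0$ and the final lower bound of the relative entropy by the $L^2$ quantities only make precise steps the paper leaves implicit.
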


The theorem is proved by differentiating the relative entropy \eqref{1.Hinfty} with respect to time. The entropy inequality \eqref{1.ei} shows that
\begin{align*}
  \frac{d}{dt}H_{BR}(u|u^\infty) 
  = \frac{dH_{BR}}{dt}(u)
  \le -\sum_{i=1}^n\int_\Omega u_i|\na w_i|^2 dx,
\end{align*}
and the goal is to estimate the entropy production from above in terms of the relative entropy. For this step, we need pure Neumann conditions for $\Phi$. Then, using the logarithmic Sobolev and Poincar\'e--Wirtinger inequalities, we end up with
\begin{align*}
  \frac{d}{dt}H_{BR}(u|u^\infty) \le -\lambda\sigma H_{BR}(u|u^\infty),
\end{align*}
where $\lambda>0$ depends on $\alpha$, $\max_{i,j=1,\ldots,n}a_{ij}$, and the constants of the logarithmic Sobolev and Poincar\'e--Wirtinger inequalities. Gronwall's lemma finishes the proof.

The paper is organized as follows. The existence result is proved in Section \ref{sec.ex}. Section \ref{sec.wsu} is concerned with the proof of the weak--strong uniqueness property, and the exponential decay is shown in Section \ref{sec.exp}. Finally, we present a numerical example in Section \ref{sec.num}.


\section{Global existence of solutions}\label{sec.ex}

The aim of this section is to prove Theorem \ref{thm.ex}. We first introduce an approximate problem in terms of the entropy variables, which is solved by means of the Leray--Schauder fixed-point theorem. Uniform estimates are derived from an approximate entropy inequality. These estimates allow us to use the Aubin--Lions compactness lemma to pass to the de-regularization limit. 

\subsection{Preparations}

First, we show that the mapping between entropy variables and densities is invertible. We introduce the ionic charge vector $z=(z_1,\ldots,z_n)\in\R^n$ .

\begin{lemma}\label{lem.u}
Let $\sigma>0$. Introduce the function $F:(0,\infty)^n\to\R^n$, $F_i(u)= \sigma\log u_i+p_i(u)$, where $p_i$ is defined in \eqref{1.p}. There exists a mapping $u:\R^n\times\R\to(0,\infty)^n$, $(w,\Phi)\mapsto u(w,\Phi)$, such that $u(w,\Phi)=F^{-1}(w-z\Phi)$. Moreover, $\Phi\mapsto \sum_{i=1}^n z_iu_i(w,\Phi)$ is a decreasing function.
\end{lemma}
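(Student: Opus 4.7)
The plan is to recognize $F$ as the gradient of a strictly convex, essentially smooth potential on the open orthant $(0,\infty)^n$, derive the existence of $F^{-1}$ by convex analysis, and then read off the monotonicity in $\Phi$ from the positive definiteness of $DF$.

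First, I would introduce
\[
  \chi(u) = \sigma \sum_{i=1}^n u_i(\log u_i - 1) + \tfrac12 \sum_{i,j=1}^n a_{ij} u_i u_j
\]
and verify that $\pa\chi/\pa u_i = \sigma\log u_i + p_i(u) = F_i(u)$, so $F = \na\chi$. By Assumption (A2), the Hessian $DF(u) = \sigma\,\mathrm{diag}(1/u_i) + (a_{ij})$ is symmetric and positive definite on $(0,\infty)^n$, so $\chi$ is strictly convex; this already gives the injectivity of $F$.

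For surjectivity, I fix $y\in\R^n$ and minimize $\Psi_y(u):=\chi(u) - y\cdot u$, extended continuously to $[0,\infty)^n$ via $u_i(\log u_i-1)\big|_{u_i=0}=0$. Positive definiteness of $(a_{ij})$ forces coercivity at infinity, and continuity plus boundedness below produce a minimizer $u^*\in[0,\infty)^n$. To exclude the boundary, if $u^*_i=0$ then a short expansion yields $\Psi_y(u^*+\eps e_i) - \Psi_y(u^*) = \sigma\eps\log\eps + O(\eps)$, which is strictly negative for all sufficiently small $\eps>0$, contradicting minimality. Hence $u^*\in(0,\infty)^n$, the Euler--Lagrange equation $\na\Psi_y(u^*)=0$ reads $F(u^*)=y$, and $F$ is a bijection. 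Setting $u(w,\Phi):=F^{-1}(w-z\Phi)$ now makes sense and takes values in $(0,\infty)^n$.

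For the monotonicity in $\Phi$, differentiating the identity $F(u(w,\Phi)) = w - z\Phi$ with respect to $\Phi$ yields $DF(u)\,\pa_\Phi u = -z$, so $\pa_\Phi u = -DF(u)^{-1}z$ and
\[
  \frac{\pa}{\pa\Phi}\sum_{i=1}^n z_i\, u_i(w,\Phi) = -z^\top DF(u)^{-1}z \le 0,
\]
with strict inequality whenever $z\neq 0$, since $DF(u)^{-1}$ is positive definite. The main obstacle I anticipate is ruling out boundary minimizers in the surjectivity step; it is precisely the logarithmic singularity of $F_i$ as $u_i\to 0^+$ (the essential smoothness of $\chi$) that drives the required one-sided perturbation argument, and everything else reduces to standard convex-analytic bookkeeping.
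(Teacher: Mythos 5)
Your proposal is correct. The monotonicity-in-$\Phi$ step is identical to the paper's: both differentiate $F(u(w,\Phi))=w-z\Phi$ and use positive definiteness of $DF(u)^{-1}$ to get $\pa_\Phi\sum_i z_iu_i=-z^\top DF(u)^{-1}z\le 0$. For the bijectivity of $F$ the two arguments diverge in presentation, though not in substance for injectivity: the paper establishes strict monotonicity of $F$ as a map via the inequality $(u-v)\cdot(F(u)-F(v))>0$, which is exactly the strict convexity of your potential $\chi$ phrased for the gradient. Where you genuinely add something is surjectivity: the paper disposes of it in one sentence (``the range of the logarithm is the whole line, [so] the range of $F$ equals $\R^n$''), which is immediate for $n=1$ but is really a claim requiring an argument for coupled components. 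Your variational route---coercivity of $\Psi_y$ from the positive definite quadratic part, existence of a minimizer on $[0,\infty)^n$, and exclusion of boundary minimizers via the $\sigma\eps\log\eps$ expansion (essential smoothness of $\chi$)---supplies that missing detail cleanly, and it only uses positive semidefiniteness plus the entropy term, consistent with the paper's parenthetical remark that injectivity survives in the semidefinite case. The trade-off is length: the paper's proof is shorter and relies on the reader accepting the range claim, while yours is self-contained at the cost of setting up the convex-analytic machinery.
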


\begin{proof} 
Taking into account that the range of the logarithm is the whole line, the range of $F$ equals $\R^n$. The function $F$ is strictly monotone since
\begin{align*}
  (u-v)\cdot(F(u)-F(v))
  = \sigma\sum_{i=1}^n (u_i-v_i)\log\frac{u_i}{v_i} 
  + \sum_{i,j=1}^n a_{ij}(u_i-v_i)(u_j-v_j) > 0
\end{align*}
for $u,v\in(0,\infty)^n$ with $u\neq v$ (this even holds if $(a_{ij})$ is positive semidefinite), showing that $F$ is one-to-one. Thus, the inverse $F^{-1}:\R^n\to(0,\infty)^n$ exists and is strictly monotone too. By the inverse function rule, it is differentiable and $(F^{-1})'(w)=(F'(u))^{-1}$. Moreover, $F'(u)$ and consequently $F'(u)^{-1}$ are positive definite. We define $u(w,\Phi):= F^{-1}(w-z\Phi)$. Then
\begin{align*}
  \frac{\pa}{\pa\Phi}\sum_{i=1}^n z_iu_i(w,\Phi)
  = -\sum_{i,j=1}^n z_iz_j (F'(u)^{-1})_{ij}
  \quad\mbox{with }u=u(w,\Phi)
\end{align*}
is negative, which means that $\Phi\mapsto \sum_{i=1}^n z_iu_i(w,\Phi)$ is decreasing.
\end{proof}

Lemma \ref{lem.u} shows that, with $u=u(w,\Phi)$,
\begin{align*}
  w_i = F_i(u) + z_i\Phi = \sigma\log u_i + p_i(u) + z_i\Phi, 
  \quad i=1,\ldots,n.
\end{align*}

\begin{lemma}
Let $w\in L^\infty(\Omega;\R^n)$ and $\Phi_D\in L^\infty(\Omega)$. Then 
there exists a unique weak solution $\Phi\in H^1(\Omega)\cap L^\infty(\Omega)$ to
\begin{align}\label{2.Phi}
  -\Delta\Phi = \sum_{i=1}^n z_iu_i(w,\Phi)\quad\mbox{in }\Omega,
  \quad \Phi=\Phi_D\quad\mbox{on }\Gamma_D, \quad
  \na\Phi\cdot\nu=0\quad\mbox{on }\Gamma_N.
\end{align}
\end{lemma}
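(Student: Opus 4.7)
The plan is to reformulate \eqref{2.Phi} as a monotone variational equation on $H_D^1(\Omega)$ and solve it by the Browder--Minty theorem, combined with a Stampacchia-type $L^\infty$ bound to handle the exponential nonlinearity. Using the extension $\bar\Phi_D\in H^1(\Omega)\cap L^\infty(\Omega)$ granted by Assumption~(A4), I substitute $\Phi=\bar\Phi_D+v$ with $v\in H_D^1(\Omega)$ and rewrite \eqref{2.Phi} as
\begin{align*}
  \int_\Omega\na v\cdot\na\psi\,dx - \int_\Omega\sum_{i=1}^n z_iu_i(w,v+\bar\Phi_D)\psi\,dx
  = -\int_\Omega\na\bar\Phi_D\cdot\na\psi\,dx
\end{align*}
for all $\psi\in H_D^1(\Omega)$. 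The decisive structural feature is that, by Lemma~\ref{lem.u}, the map $\Phi\mapsto-\sum_{i=1}^n z_iu_i(w,\Phi)$ is nondecreasing, so the associated operator is monotone.

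The preliminary difficulty is that $u_i(w,\Phi)=F^{-1}(w-z\Phi)_i$ depends exponentially on $\Phi$ through the $\sigma\log u_i$ part of $F$, and therefore need not be integrable for arbitrary $v\in H^1(\Omega)$. I circumvent this by truncation: with $T_M(s)=\max(-M,\min(s,M))$, I replace $u_i(w,\Phi)$ by $u_i(w,T_M(\Phi))$. Since $w\in L^\infty(\Omega)$, this yields a nonlinearity in $L^\infty(\Omega)$ that is still nondecreasing in $\Phi$, so the resulting operator $A_M\colon H_D^1(\Omega)\to H_D^1(\Omega)'$ is bounded, hemicontinuous (by continuity of $F^{-1}$ and dominated convergence), monotone, and coercive via Poincar\'e's inequality on $H_D^1(\Omega)$ when $\Gamma_D\neq\emptyset$. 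The Browder--Minty theorem provides a unique $v_M\in H_D^1(\Omega)$; in the pure-Neumann case $\Gamma_D=\emptyset$ one works in the zero-mean subspace of $H^1(\Omega)$, the compatibility $\int_\Omega\sum_i z_iu_i(w,\Phi)\,dx=0$ being fixable by a constant shift thanks to the strict monotonicity.

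The main obstacle is the $M$-uniform $L^\infty$ bound on $\Phi_M:=v_M+\bar\Phi_D$ that allows the truncation to be removed. For this I apply Stampacchia's method: for $K\ge\|\bar\Phi_D\|_{L^\infty(\Omega)}$, the function $(\Phi_M-K)_+$ lies in $H_D^1(\Omega)$ and testing the equation with it yields, via the monotonicity from Lemma~\ref{lem.u}, that on $\{\Phi_M>K\}$ one has $\sum_i z_iu_i(w,T_M(\Phi_M))\le\sum_i z_iu_i(w,T_M(K))$, whose right-hand side is $L^\infty$-bounded independently of $M$ since $K$ is fixed and $w\in L^\infty(\Omega)$. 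A De~Giorgi iteration then produces $\|(\Phi_M-K)_+\|_{L^\infty(\Omega)}\le C$ uniformly in $M$, and the negative part is treated symmetrically with $(\Phi_M+K)_-$. Choosing $M$ above this bound renders the truncation inactive, so $\Phi_M$ solves \eqref{2.Phi}, and uniqueness in $H^1(\Omega)\cap L^\infty(\Omega)$ follows by subtracting two solutions, testing the difference in $H_D^1(\Omega)$, and invoking the strict monotonicity of the original (untruncated) operator.
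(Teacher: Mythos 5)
Your proof is correct and follows essentially the same route as the paper: existence and uniqueness of $\Phi-\Phi_D\in H_D^1(\Omega)$ from the monotonicity of $\Phi\mapsto-\sum_{i=1}^n z_iu_i(w,\Phi)$ established in Lemma \ref{lem.u}, followed by a Stampacchia truncation argument for the $L^\infty(\Omega)$ bound that exploits exactly the same comparison $\sum_i z_iu_i(w,\Phi)\le\sum_i z_iu_i(w,m)$ on $\{\Phi>m\}$. The only deviation is your auxiliary truncation $T_M$, which is harmless but dispensable: since $a_{ij}\ge 0$ and $a_{ii}>0$, the relation $w_i-z_i\Phi=\sigma\log u_i+\sum_j a_{ij}u_j\ge\sigma\log u_i+a_{ii}u_i$ shows that $u_i(w,\Phi)$ grows at most linearly (not exponentially) in $|\Phi|$, so the untruncated operator is already well defined and bounded from $H_D^1(\Omega)$ to its dual.
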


\begin{proof}
The existence and uniqueness of $\Phi-\Phi_D\in H_D^1(\Omega)$ to \eqref{2.Phi} follows from the monotonicity of $\Phi\mapsto -\sum_{i=1}^n z_iu_i(w,\Phi)$. The boundedness of $\Phi$ is a consequence of the Stampacchia truncation method \cite[Sec.~2.3]{Tro87}. Indeed, let $m>m_0:=\|\Phi_D\|_{L^\infty(\Omega)}$ and use $(\Phi-m)^+=\max\{0,\Phi-m\}$ as a test function in the weak formulation of \eqref{2.Phi}:
\begin{align*}
  \int_\Omega&|\na(\Phi-m)^+|^2 dx
  = \sum_{i=1}^n\int_\Omega z_i\big(u_i(w,\Phi)-u_i(w,m)\big)
  (\Phi-m)^+ dx \\
  &\phantom{xx}+ \sum_{i=1}^n\int_\Omega z_i u_i(w,m)(\Phi-m)^+ dx
  \le \sum_{i=1}^n\int_\Omega z_i u_i(w,m_0)(\Phi-m)^+ dx \\
  &= C(\|w\|_{L^\infty(\Omega)})\|(\Phi-m)^+\|_{L^2(\Omega)}g(m)^{1/2}
  \le C\|\na(\Phi-m)^+\|_{L^2(\Omega)}
  g(m)^{1/2},
\end{align*}
where $g(m)=\operatorname{meas}(\{\Phi>m\})$ and we used the Poincar\'e inequality. This shows that
\begin{align*}
  \|\na(\Phi-m)^+\|_{L^2(\Omega)} \le C g(m)^{1/2}.
\end{align*}
By the Poincar\'e--Sobolev inequality, for $2<r<2d/(d-2)$ ($2<r<\infty$ if $d\le 2$) and $M>m$,
\begin{align*}
  C\|\na(\Phi-m)^+\|_{L^2(\Omega)}
  &\ge \|(\Phi-m)^+\|_{L^r(\Omega)}
  \ge \bigg(\int_{\{\Phi>M\}}[(\Phi-m)^+]^r dx\bigg)^{1/r} \\
  &\ge \bigg(\int_{\{\Phi>M\}}(M-m)^r dx\bigg)^{1/r} 
  = (M-m)g(M)^{1/r},
\end{align*}
and therefore, $g(M)\le C(M-m)^{-r} g(m)^{r/2}$. (In this paper, $C>0$ is a generic constant whose value may change from line to line.) Since $r/2>1$, it follows from Stampacchia's lemma \cite[Lemma 2.9]{Tro87} that there exists $M_0>0$ (depending on $m_0$) such that $g(M_0+m_0)=0$. Consequently, $\Phi\le M_1:=M_0+m_0$ in $\Omega$. We can prove $\Phi\ge M_2$ for some $M_2\in\R$ in a similar way.
\end{proof}

This result also holds for homogeneous Neumann boundary conditions if $\sum_{i=1}^n\int_\Omega z_iu_i^0 dx$ $=0$ in $\Omega$. (Instead of the Poincar\'e inequality, we need the Poincar\'e--Wirtinger inequality.) The solution $\Phi$ is unique if we require that $\int_\Omega\Phi dx = 0$.


\subsection{Definition of an approximate problem}

Let $T>0$, $N\in\N$, $\tau=T/N>0$, and let $m\in\N$ be such that $m>d/2$. This implies that the embedding $H^m(\Omega)\hookrightarrow L^\infty(\Omega)$ is compact. We approximate the function $w(x,k\tau)$ by the piecewise constant in time function $w^k(x)$ and add a regularizing term. For this, let $w^{k-1}\in L^\infty(\Omega;\R^n)$ be given and let $\Phi^{k-1}-\Phi_D\in H^1_D(\Omega)\cap L^\infty(\Omega)$ be the unique solution to \eqref{2.Phi} with $w=w^{k-1}$. We introduce $u^{k-1}=u(w^{k-1},\Phi^{k-1})$, where $u(w,\Phi)$ is defined in Lemma \ref{lem.u}. Recall that $u_i^k>0$ in $\Omega$ by construction. In particular,
\begin{align}\label{2.w}
  w_i^k = \sigma\log u_i^k + z_i\Phi^k
  + \sum_{j=1}^n a_{ij}u_j^k, \quad
  \mbox{where }u_i^k = u_i(w^k,\Phi^k), \ i=1,\ldots,n.
\end{align}
If $k=1$, let $\Phi^0\in H^1(\Omega)$ be the unique solution to $-\Delta\Phi^0=\sum_{i=1}^n z_iu_i^0$ in $\Omega$ with the boundary conditions in \eqref{1.Phi} and set $w_i^0=\sigma\log u_i^0  + z_i\Phi^0 + p_i(u^0)$. 

We wish to find $w^k\in H^m(\Omega;\R^n)$ and $\Phi^k-\Phi_D\in H^1_D(\Omega)\cap L^\infty(\Omega)$ such that
\begin{align}\label{approx1}
  & \frac{1}{\tau}\int_\Omega
  \big(u(w^k,\Phi^k)-u(w^{k-1},\Phi^{k-1})\big)
  \cdot\phi dx \\
  &\phantom{xx}+ \sum_{i=1}^n\int_\Omega 
  u_i(w^k,\Phi^k)\na w_i^k\cdot\na\phi_i dx
  + \eps b(w^k,\phi) = 0, \nonumber \\
  & \int_\Omega\na\Phi^k\cdot\na\psi dx 
  = \int_\Omega\sum_{i=1}^n z_i u_i(w^k,\Phi^k)\psi dx \label{approx2}
\end{align}
for all $\phi\in H^m(\Omega;\R^n)$ and $\psi\in H_D^1(\Omega)$, where
\begin{align}\label{2.b}
  b(w^k,\phi) = \int_\Omega\bigg(\sum_{|\alpha|=m}D^\alpha w^k
  \cdot D^\alpha\phi + w^k\cdot\phi\bigg)dx,
\end{align}
$\alpha=(\alpha_1,\ldots,\alpha_d)\in\N_0^d$ is a multi-index, $|\alpha|=\alpha_1+\cdots+\alpha_d$, and $D^\alpha=\pa^{|\alpha|}/\pa x_1^{\alpha_1}\cdots\pa x_d^{\alpha_d}$ is a partial derivative. 


\subsection{Approximate entropy inequality}

Assuming the existence of a weak solution $(w^k,$ $\Phi^k)$ to \eqref{approx1}--\eqref{approx2}, we derive the approximate entropy inequality which is used for the fixed-point argument. We set $u^k:=u(w^k,\Phi^k)$. 

\begin{lemma}[Approximate entropy inequality]\label{lem.ei}
Let $(w^k,\Phi^k)$ be a weak solution to \eqref{approx1}--\eqref{approx2}. Then, with $H_{BR}$ defined in \eqref{1.H},
\begin{align}\label{2.eiHBR}
  H_{BR}(u^k) + \tau\sum_{i=1}^n\int_\Omega u_i^k|\na w_i^k|^2 dx
  + C\eps\tau\|w^k\|_{H^m(\Omega)}^2
  \le H_{BR}(u^{k-1}).
\end{align}
\end{lemma}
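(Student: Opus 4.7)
The plan is to insert the admissible test function $\phi = w^k$ into the weak formulation \eqref{approx1}—which is legal because $w^k\in H^m(\Omega;\R^n)$ belongs to the test space by construction—to obtain
\begin{equation*}
  \frac{1}{\tau}\int_\Omega\bigl(u^k - u^{k-1}\bigr)\cdot w^k\,dx
  + \sum_{i=1}^n\int_\Omega u_i^k|\na w_i^k|^2\,dx + \eps\, b(w^k,w^k) = 0.
\end{equation*}
The production and regularization terms already carry the correct signs, so the whole task is to bound the first term from below by $\tau^{-1}(H_{BR}(u^k) - H_{BR}(u^{k-1}))$; this is a discrete chain-rule inequality for the entropy.

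To that end I would verify that, as a functional of $u$ with $\Phi=\Phi[u]$ solving the Poisson problem \eqref{2.Phi}, $u\mapsto H_{BR}(u)$ is convex. Pointwise, $\sigma u_i(\log u_i - 1)$ is convex, the Rao term $\tfrac12\sum_{i,j} a_{ij}u_iu_j$ is convex by positive definiteness of $(a_{ij})$, and $\sum_i z_iu_i\Phi_D$ is linear. The nonlocal electric term $\tfrac12\int|\na(\Phi[u]-\Phi_D)|^2 dx$ is convex as well: linearity of the Poisson problem (together with $\Delta\Phi_D=0$) makes $u\mapsto\Phi[u]-\Phi_D$ linear, and the squared $L^2$-norm of its gradient is the composition of a linear map with a convex quadratic. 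Computing the variational derivative—using that any admissible variation $\delta\Phi$ vanishes on $\Gamma_D$ and has zero normal trace on $\Gamma_N$, so every boundary term produced by integration by parts disappears—then gives $\delta H_{BR}/\delta u_i = \sigma\log u_i + z_i\Phi + p_i(u) = w_i$. The standard convexity inequality (convex functional lies above its affine tangent) at the points $u^k$ and $u^{k-1}$ therefore yields
\begin{equation*}
  H_{BR}(u^k) - H_{BR}(u^{k-1}) \leq \sum_{i=1}^n\int_\Omega w_i^k\bigl(u_i^k-u_i^{k-1}\bigr)\,dx.
\end{equation*}

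Substituting this estimate into the tested identity converts the discrete time derivative into a genuine entropy decrement; after multiplication by $\tau$ this is \eqref{2.eiHBR} modulo the norm equivalence $b(w,w) \geq C\|w\|_{H^m(\Omega)}^2$, which is the routine fact that $\|D^m\cdot\|_{L^2}^2 + \|\cdot\|_{L^2}^2$ is equivalent to the full $H^m$ inner product on a bounded Lipschitz domain (e.g.\ via Gagliardo--Nirenberg interpolation of intermediate derivatives). I expect the only delicate step to be the convexity of $H_{BR}$ together with the correct identification of its variational derivative as $w_i$: this is where the nonlocality of the electric energy and the mixed Dirichlet--Neumann boundary data interact, and it is essential that $\Delta\Phi_D = 0$ and $\na\Phi_D\cdot\nu=0$ on $\Gamma_N$ so that no spurious boundary contribution survives. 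Once that step is in place, everything else is bookkeeping.
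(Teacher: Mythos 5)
Your proposal is correct and follows essentially the same route as the paper: test \eqref{approx1} with $\phi=w^k$, use $b(w^k,w^k)\ge C\|w^k\|_{H^m(\Omega)}^2$, and reduce everything to the discrete chain-rule inequality $\int_\Omega(u^k-u^{k-1})\cdot w^k\,dx\ge H_{BR}(u^k)-H_{BR}(u^{k-1})$. The paper proves that inequality by three explicit term-by-term estimates (convexity of $u\mapsto\sum_i u_i(\log u_i-1)$, completing the square for the Rao term, and the discrete Poisson equation for the electric term), which are precisely the unpacked versions of the single convexity-plus-tangent-plane argument you invoke, including your correct identification of $\delta H_{BR}/\delta u_i=w_i$ via the vanishing of the boundary terms on $\Gamma_D$ and $\Gamma_N$.
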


\begin{proof}
We choose $w^k\in H^m(\Omega;\R^n)$ as a test function in \eqref{approx1} and use the generalized Poincar\'e inequality \cite[Chap.~2, Sec.~1.4]{Tem97}, which gives $b(w^k,w^k)\ge C\|w^k\|_{H^m(\Omega)}^2$, to find that
\begin{align*}
  \int_\Omega(u^k-u^{k-1})\cdot w^k dx 
  + \tau\sum_{i=1}^n\int_\Omega u_i^k|\na w_i^k|^2 dx
  + C\eps\tau\|w^k\|_{H^m(\Omega)}^2 \le 0.
\end{align*}
We rewrite the first term as
\begin{align}\label{2.aux1}
  \int_\Omega(u^k&-u^{k-1})\cdot w^k dx
  = \sigma\sum_{i=1}^n\int_\Omega (u_i^k-u_i^{k-1})\log u_i^k dx \\
  &+ \sum_{i,j=1}^n\int_\Omega a_{ij}(u_i^k-u_i^{k-1})u_j^k dx
  + \sum_{i=1}^n\int_\Omega z_i(u_i^k-u_i^{k-1})(\Phi^k-\Phi_D) dx 
  \nonumber \\
  &+ \sum_{i=1}^n\int_\Omega z_i(u_i^k-u_i^{k-1})\Phi_D dx. \nonumber
\end{align}
We estimate the integrals on the right-hand side term by term. The convexity of $u\mapsto \sum_{i=1}^nu_i(\log u_i-1)$ implies that 
\begin{align*}
  \sigma\sum_{i=1}^n\int_\Omega (u_i^k-u_i^{k-1})\log u_i^k dx
  \ge \sigma\sum_{i=1}^n\int_\Omega\big(u_i^k(\log u_i^k-1)
  - u_i^{k-1}(\log u_i^{k-1}-1)\big) dx.
\end{align*}
We deduce from the symmetry and positive semidefiniteness of $(a_{ij})$ that
\begin{align*}
  \sum_{i,j=1}^n&\int_\Omega a_{ij}(u_i^k-u_i^{k-1})u_j^k dx
  = \sum_{i,j=1}^n\int_\Omega a_{ij}\bigg(u_i^ku_j^k 
  - \frac12 u_i^{k-1}u_j^k - \frac12 u_i^ku_j^{k-1}\bigg)dx \\
  &= \frac12\sum_{i,j=1}^n\int_\Omega a_{ij}(u_i^ku_j^k
  - u_i^{k-1}u_j^{k-1})dx + \frac12\sum_{i,j=1}^n\int_\Omega a_{ij}
  (u_i^k-u_i^{k-1})(u_j^k-u_j^{k-1})dx \\
  &\ge \frac12\sum_{i,j=1}^n\int_\Omega a_{ij}(u_i^ku_j^k
  - u_i^{k-1}u_j^{k-1})dx.
\end{align*}
Finally, taking into account the Poisson equation, we have
\begin{align*}
  \sum_{i=1}^n&\int_\Omega z_i(u_i^k-u_i^{k-1})(\Phi^k-\Phi_D)dx
  = \int_\Omega\na(\Phi^k-\Phi^{k-1})\cdot\na(\Phi^k-\Phi_D)dx \\
  &= \frac12\int_\Omega\big(|\na(\Phi^k-\Phi_D)|^2 
  - |\na(\Phi^{k-1}-\Phi_D)|^2\big)dx
  + \frac12\int_\Omega|\na(\Phi^k-\Phi^{k-1})|^2 dx \\
  &\ge \frac12\int_\Omega|\na(\Phi^k-\Phi_D)|^2 dx
  -\frac12\int_\Omega|\na(\Phi^{k-1}-\Phi_D)|^2dx.
\end{align*}
Hence, by the definition of $h_{BR}$ in \eqref{1.H}, it follows from \eqref{2.aux1} that
\begin{align*}
  \sum_{i=1}^n(u^k-u^{k-1})\cdot w^k dx 
  \ge \int_\Omega\big(h_{BR}(u^k)-h_{BR}(u^{k-1})\big)dx.
\end{align*}
We conclude that
\begin{align*}
  H_{BR}(u^k) - H_{BR}(u^{k-1}) + \tau\sum_{i=1}^n\int_\Omega
  u_i^k|\na w_i^k|^2 dx + C\eps\tau\|w^k\|_{H^m(\Omega)}^2 \le 0.
\end{align*}
This finishes the proof.
\end{proof}

We can derive uniform gradient bounds from the entropy inequality. This is the key step of the proof.

\begin{lemma}[Entropy production estimate]\label{lem.ep}
There exists $C>0$ independent of $(\eps,\tau)$ such that
\begin{align*}
  \sum_{i=1}^n\int_\Omega u_i^k|\na w_i^k|^2 dx
  &\ge \sum_{i=1}^n\int_\Omega\big(4\sigma^2|\na(u_i^k)^{1/2}|^2
  + \alpha\sigma|\na u_i^k|^2 \\
  &\phantom{xx}+ u_i^k|\na(p_i(u^k)+z_i\Phi^k)|^2\big)dx
  - C\sigma (H_{BR}(u^k)+1).
\end{align*}
\end{lemma}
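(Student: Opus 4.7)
The plan is to expand $|\na w_i^k|^2$ pointwise, separate the ``pure'' squares from a mixed term, and control the mixed term by Young's inequality together with the positive definiteness of $(a_{ij})$; the leftover will be a constant multiple of $\|\na\Phi^k\|_{L^2}^2$, which is then absorbed into $H_{BR}(u^k)$.

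Writing $v_i:=p_i(u^k)+z_i\Phi^k$, so that $w_i^k=\sigma\log u_i^k+v_i$ on $\{u_i^k>0\}$, and using the identities $u_i\na\log u_i=\na u_i$ and $u_i|\na\log u_i|^2=4|\na\sqrt{u_i}|^2$, I obtain the pointwise decomposition
\begin{align*}
u_i^k|\na w_i^k|^2 = 4\sigma^2|\na\sqrt{u_i^k}|^2 + u_i^k|\na v_i|^2 + 2\sigma\na u_i^k\cdot\na v_i.
\end{align*}
The first two summands are already present in the claimed lower bound, so only the cross term needs work. Expanding $\na v_i=\sum_j a_{ij}\na u_j^k+z_i\na\Phi^k$ and summing over $i$ gives
\begin{align*}
2\sigma\sum_{i=1}^n\na u_i^k\cdot\na v_i = 2\sigma\sum_{i,j=1}^n a_{ij}\na u_i^k\cdot\na u_j^k + 2\sigma\sum_{i=1}^n z_i\na u_i^k\cdot\na\Phi^k.
\end{align*}
By (A2), $(a_{ij})$ has smallest eigenvalue $\alpha_0>0$, so the first term is bounded below by $2\sigma\alpha_0\sum_i|\na u_i^k|^2$. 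Applying Young's inequality to the $\na\Phi^k$-term in order to absorb half of this quadratic form, I arrive at the pointwise estimate
\begin{align*}
\sum_{i=1}^n u_i^k|\na w_i^k|^2 \ge \sum_{i=1}^n\big(4\sigma^2|\na\sqrt{u_i^k}|^2 + \alpha\sigma|\na u_i^k|^2 + u_i^k|\na v_i|^2\big) - C\sigma|\na\Phi^k|^2,
\end{align*}
with $\alpha:=\alpha_0$ and $C$ depending on $\max_i|z_i|$ and $\alpha_0$.

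It remains, after integrating over $\Omega$, to bound $\int_\Omega|\na\Phi^k|^2 dx$ by $C(H_{BR}(u^k)+1)$. Since $\na\Phi_D\in L^2(\Omega)$ by (A4), it suffices to control $\int_\Omega|\na(\Phi^k-\Phi_D)|^2 dx$, which is the electric part of $h_{BR}$. Inspecting the definition \eqref{1.H}, the term $\sigma u_i(\log u_i-1)$ is bounded below by the pointwise convexity minimum $-\sigma$, while the Dirichlet correction $\sum_i z_i u_i^k\Phi_D$ can be absorbed into $\tfrac12\sum_{i,j}a_{ij}u_i^k u_j^k$ by Young's inequality, using $\|\Phi_D\|_{L^\infty}<\infty$ and the positive definiteness of $(a_{ij})$; only an additive constant depending on $|\Omega|$, $\Phi_D$, and the data survives. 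This yields $\int_\Omega|\na(\Phi^k-\Phi_D)|^2 dx\le 2H_{BR}(u^k)+C$, and combined with the pointwise estimate above gives the assertion.

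The main obstacle is not any single algebraic step but the bookkeeping of signs in this last paragraph: $h_{BR}$ is not \emph{a priori} nonnegative because of the logarithmic and Dirichlet-correction terms, so the convexity lower bound $u(\log u-1)\ge-1$ and the positive definiteness of $(a_{ij})$ must be combined carefully to guarantee that $\|\na(\Phi^k-\Phi_D)\|_{L^2}^2$ is indeed dominated by $H_{BR}(u^k)+C$.
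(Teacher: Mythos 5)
Your proof is correct and follows essentially the same route as the paper: expand $u_i^k|\na w_i^k|^2$, bound the mixed term via the smallest eigenvalue of $(a_{ij})$ and Young's inequality, and absorb the residual $|\na\Phi^k|^2$ into $H_{BR}(u^k)+1$. Your final paragraph merely spells out (correctly) the sign bookkeeping that the paper compresses into the one-line estimate $\frac{\sigma}{\alpha}\sum_i z_i^2|\na\Phi^k|^2\le C\sigma(h_{BR}(u^k)+1)$.
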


\begin{proof}
We expand $u_i^k|\na w_i^k|^2$ by inserting definition \eqref{2.w} of $w^k_i$:
\begin{align*}
  \sum_{i=1}^n& u_i^k|\na w_i^k|^2 
  = \sigma^2\sum_{i=1}^n u_i^k|\na\log u_i^k|^2
  + \sum_{i=1}^n u_i^k|\na(p_i(u^k)+z_i\Phi^k)|^2 \\
  &\phantom{xx}+ 2\sigma\sum_{i=1}^n u_i^k
  \na\log u_i^k\cdot\na(p_i(u^k)+z_i\Phi^k) \\
  &= 4\sigma^2\sum_{i=1}^n|\na(u_i^k)^{1/2}|^2
  + \sum_{i=1}^n u_i^k|\na(p_i(u^k)+z_i\Phi^k)|^2 \\
  &\phantom{xx}+ 2\sigma\sum_{i,j=1}^n a_{ij}\na u_i^k\cdot\na u_j^k 
  + 2\sigma\sum_{i=1}^n z_i \na u_i^k\cdot\na\Phi^k \\
  &\ge 4\sigma^2\sum_{i=1}^n|\na(u_i^k)^{1/2}|^2
  + \sum_{i=1}^n u_i^k|\na(p_i(u^k)+z_i\Phi^k)|^2 \\
  &\phantom{xx}+ \alpha\sigma\sum_{i=1}^n|\na u_i^k|^2 
  - \frac{\sigma}{\alpha}\sum_{i=1}^n z_i^2|\na\Phi^k|^2,
\end{align*}
where we used $\sum_{i,j=1}^n a_{ij}\xi_i\xi_j\ge \alpha|\xi|^2$ for $\xi\in\R^n$ ($\alpha>0$ is the smallest eigenvalue of $(a_{ij})$) and Young's inequality in the last step. The last term is estimated according to
\begin{align*}
  \frac{\sigma}{\alpha}\sum_{i=1}^n z_i^2|\na\Phi^k|^2
  \le C\sigma (h_{BR}(u^k)+1),
\end{align*}
where $C>0$ depends on the $H^1(\Omega)$ norm of $\Phi_D$, which ends the proof.
\end{proof}

Lemmas \ref{lem.ei} and \ref{lem.ep} imply the following uniform bounds.

\begin{lemma}[Uniform bounds]\label{lem.unif}
Let $\tau>0$ be sufficiently small. Then there exists $C>0$ independent of $\eps$ and $\tau$ such that
\begin{align}\label{2.u}
  \sup_{k=1,\ldots,N}\|u^k\|_{L^2(\Omega)}
  &+ \tau\sum_{k=1}^N\bigg(\sum_{i=1}^n\|(u_i^k)^{1/2}\|_{H^1(\Omega)}^2 
  + \|u^k\|_{H^1(\Omega)}^2\bigg) \\
  &+ \tau\sum_{k=1}^N\bigg(\sum_{i=1}^n\|(u_i^k)^{1/2}
  \na w_i^k\|_{L^2(\Omega)}^2 + \eps\|w^k\|_{H^m(\Omega)}^2\bigg) \le C.
  \nonumber 
\end{align}
If Assumption (A5) holds then, for some constant $C>0$ independent of $\eps$ and $\tau$,
\begin{align}\label{2.up}
  \tau\sum_{i=1}^n\int_\Omega u_i^k\big(|\na p_i(u^k)|^2 
  + |\na\Phi^k|^2\big) dx \le C.
\end{align}
\end{lemma}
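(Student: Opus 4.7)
The overall strategy is to combine Lemma \ref{lem.ei} with the entropy-production estimate of Lemma \ref{lem.ep} and close the loop via a discrete Gronwall argument; this will deliver a uniform bound on $H_{BR}(u^k)$, from which the individual estimates can be read off together with standard Sobolev embeddings.

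First, I would insert the lower bound from Lemma \ref{lem.ep} into the discrete entropy inequality \eqref{2.eiHBR}. The "bad" term $C\sigma(H_{BR}(u^k)+1)$ carries a factor of $\tau$ after multiplication, and for $\tau$ small enough it can be absorbed on the left, yielding an inequality of the form $(1-C\sigma\tau)H_{BR}(u^k) + \tau R_k + C\eps\tau\|w^k\|_{H^m(\Omega)}^2 \le H_{BR}(u^{k-1}) + C\sigma\tau$, where $R_k$ collects the three good gradient pieces $4\sigma^2|\na(u_i^k)^{1/2}|^2$, $\alpha\sigma|\na u_i^k|^2$ and $u_i^k|\na(p_i(u^k)+z_i\Phi^k)|^2$. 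A routine discrete Gronwall iteration then produces $\sup_k H_{BR}(u^k)\le C$, independently of $\eps$ and $\tau$, and summing the inequality over $k$ controls $\tau\sum_k R_k$ as well as $\eps\tau\sum_k\|w^k\|_{H^m(\Omega)}^2$. The bound on $\tau\sum_k\|(u_i^k)^{1/2}\na w_i^k\|_{L^2(\Omega)}^2$ comes directly from the $\sum\int u_i^k|\na w_i^k|^2 dx$ piece of \eqref{2.eiHBR} before invoking Lemma \ref{lem.ep}.

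Next, I would upgrade to the full $H^1$-norm and to $\sup_k\|u^k\|_{L^2(\Omega)}$. For this I exploit that $h_{BR}(u)$ contains the quadratic form $\tfrac12\sum a_{ij}u_iu_j \ge \tfrac\alpha2|u|^2$ by positive definiteness of $(a_{ij})$, while the Boltzmann piece is bounded below by $-\sigma n|\Omega|$, the electric piece is nonnegative, and the Dirichlet-compensation term $\int\sum z_iu_i\Phi_D dx$ is controlled by $\|\Phi_D\|_{L^\infty(\Omega)}\|u\|_{L^1(\Omega)}$. A Young absorption then converts the uniform $H_{BR}$-bound into $\sup_k\|u^k\|_{L^2(\Omega)}\le C$; combined with $\tau\sum\|\na u_i^k\|_{L^2(\Omega)}^2\le C$ from $R_k$, this gives the $H^1(\Omega)$ bound in \eqref{2.u}. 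The $H^1$-bound on $(u_i^k)^{1/2}$ uses $\|(u_i^k)^{1/2}\|_{L^2(\Omega)}^2=\|u_i^k\|_{L^1(\Omega)}\le C$.

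For \eqref{2.up} under Assumption (A5), the uniform $L^2$ bound on $u^k$ controls $\|\sum_i z_iu_i^k\|_{L^2(\Omega)}$, and (A5) applied to \eqref{2.Phi} yields $\|\Phi^k\|_{H^2(\Omega)}\le C$ uniformly in $k$. Since $d\le 4$, the Sobolev embedding $H^2(\Omega)\hookrightarrow W^{1,4}(\Omega)$ gives $\|\na\Phi^k\|_{L^4(\Omega)}\le C$, and H\"older's inequality then yields $\int_\Omega u_i^k|\na\Phi^k|^2 dx\le\|u_i^k\|_{L^2(\Omega)}\|\na\Phi^k\|_{L^4(\Omega)}^2\le C$; the bound on $\int u_i^k|\na p_i(u^k)|^2 dx$ follows by writing $|\na p_i(u^k)|^2\le 2|\na(p_i(u^k)+z_i\Phi^k)|^2+2z_i^2|\na\Phi^k|^2$ and combining with the already-proved $\tau\sum_k$ bound on the first summand coming from $R_k$. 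I expect the main obstacle to be the discrete Gronwall step: one must verify that the smallness assumption on $\tau$ really does permit absorption of $C\sigma\tau H_{BR}(u^k)$ on the left-hand side without destroying the gradient terms, and that the iteration constant stays bounded uniformly in $N$ as $\tau=T/N\to 0$.
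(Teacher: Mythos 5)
Your proposal is correct and follows essentially the same route as the paper: insert Lemma \ref{lem.ep} into the discrete entropy inequality of Lemma \ref{lem.ei}, absorb the $C\sigma\tau H_{BR}(u^k)$ term for small $\tau$, apply the discrete Gronwall lemma, extract $\sup_k\|u^k\|_{L^2(\Omega)}$ from the positive definiteness of $(a_{ij})$ in $h_{BR}$, and obtain \eqref{2.up} via Assumption (A5), the embedding $H^2(\Omega)\hookrightarrow W^{1,4}(\Omega)$ for $d\le 4$, and H\"older's inequality. The only cosmetic difference is that you bound $\int_\Omega u_i^k|\na\Phi^k|^2\,dx$ uniformly in $k$ directly, whereas the paper carries $\|u_i^k\|_{L^2(\Omega)}^3$ through the sum over $k$; both yield the same conclusion.
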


\begin{proof}
We insert the entropy production estimate of Lemma \ref{lem.ep} into the entropy inequality of Lemma \ref{lem.ei},
\begin{align*}
  (1&-\tau\sigma C)H_{BR}(u^k) 
  + \tau\sum_{i=1}^n\int_\Omega
  \big(4\sigma^2|\na(u_i^k)^{1/2}|^2
  + \alpha\sigma|\na u_i^k|^2 \\
  &+ u_i^k|\na(p_i(u^k)+z_i\Phi^k)|^2\big)dx
  + C\eps\tau\|w^k\|_{H^m(\Omega)}^2 
  \le H_{BR}(u^{k-1}) + C\tau\sigma,
\end{align*}
and sum the resulting inequality over $k=1,\ldots,j$ for $1<j\le N$:
\begin{align*}
  (1&-\tau\sigma C)H_{BR}(u^j) + \tau\sum_{k=1}^j\sum_{i=1}^n\int_\Omega
  \big(4\sigma^2|\na(u_i^k)^{1/2}|^2 + \alpha\sigma|\na u_i^k|^2 \\
  &\phantom{xx}+ u_i^k|\na(p_i(u^k)+z_i\Phi^k)|^2\big)dx
  + C\eps\tau\sum_{k=1}^j\|w^k\|_{H^m(\Omega)}^2 \\
  &\le H_{BR}(u^0) + C\sigma\tau + C\tau\sum_{k=1}^{j-1} H_{BR}(u^k).
\end{align*}
Choosing $0<\tau< 1/(C\sigma)$, the discrete Gronwall inequality \cite{Cla87} shows that for all $j\le N$,
\begin{align*}
  H_{BR}(u^j) \le \frac{H_{BR}(u^0) + C(T)}{1-\tau\sigma C}e^{CT},
\end{align*}
which leads to
\begin{align}
  &H_{BR}(u^j) + C\tau\sigma\sum_{k=1}^j\bigg(\sigma\sum_{i=1}^n
  \|\na(u_i^k)^{1/2}\|_{L^2(\Omega)}^2 
  + \|\na u^k\|_{L^2(\Omega)}^2\bigg)dx \label{2.HH} \\
  &\phantom{x}+ \tau\sum_{k=1}^j\sum_{i=1}^n\int_\Omega 
  u_i^k|\na(p_i(u^k)+z_i\Phi^k)|^2 dx 
  + C\eps\tau\sum_{k=1}^j\|w^k\|_{H^m(\Omega)}^2 \le C(u^0,T).
  \nonumber
\end{align}
We deduce from the positive definiteness of $(a_{ij})$ that $\|u^j\|_{L^2(\Omega)}$ is bounded uniformly in $(\eps,\tau)$ and uniformly for $j=1,\ldots,N$. Then estimate \eqref{2.u} follows from the Poincar\'e--Wirtinger inequality. 

Next, we prove estimate \eqref{2.up}. We infer from \eqref{2.HH} that
\begin{align}\label{2.aux3}
  \tau\sum_{k=1}^N&\sum_{i=1}^n\int_\Omega u_i^k|\na p_i(u^k)|^2 dx \\
  &\le 2\tau\sum_{k=1}^N\sum_{i=1}^n\int_\Omega 
  u_i^k|\na(p_i(u^k)+z_i\Phi^k)|^2 dx 
  + 2\tau\sum_{k=1}^N\sum_{i=1}^n\int_\Omega 
  z_i^2 u_i^k|\na\Phi^k|^2 dx \nonumber \\
  &\le C + C\tau\sum_{k=1}^N\sum_{i=1}^n\int_\Omega 
  u_i^k|\na\Phi^k|^2 dx. \nonumber 
\end{align}
We use the fact that the embedding $H^2(\Omega)\hookrightarrow W^{1,4}(\Omega)$ is continuous for $d\le 4$ to estimate
\begin{align*}
  \tau\sum_{k=1}^N\int_\Omega u_i^k|\na\Phi^k|^2 dx
  &\le \tau\sum_{k=1}^N\|u_i^k\|_{L^2(\Omega)}
  \|\na\Phi^k\|_{L^4(\Omega)}^2 \\
  &\le C\tau\sum_{k=1}^N\|u_i^k\|_{L^2(\Omega)}
  \|\Phi^k\|_{H^2(\Omega)}^2.
\end{align*}
To bound the $H^2(\Omega)$ norm of $\Phi^k$, we deduce from Assumption (A5) that $\|\Phi^k\|_{H^2(\Omega)} \le C\sum_{i=1}^n\|u_i^k\|_{L^2(\Omega)} + C(\Phi_D)$. Hence, 
\begin{align*}
  \tau\sum_{k=1}^N\sum_{i=1}^n\int_\Omega u_i^k|\na\Phi^k|^2 dx
  &\le C\tau\sum_{k=1}^N\sum_{i=1}^n\|u_i^k\|_{L^2(\Omega)}^3 + C(T) \\
  &\le CT\sum_{i=1}^n\Big(\sup_{k=1,\ldots,N}\|u_i^k\|_{L^2(\Omega)}
  \Big)^3 + C(T) \le C(T).
  \nonumber
\end{align*}
Inserting this estimate into \eqref{2.aux3} concludes the proof.
\end{proof}


\subsection{Solution of the approximate problem}

We show that problem \eqref{approx1}--\eqref{approx2} possesses a weak solution.

\begin{lemma}
Let $w^{k-1}\in L^\infty(\Omega;\R^n)$ be given and let $\Phi^{k-1}-\Phi_D\in H^1_D(\Omega)\cap L^\infty(\Omega)$ be the unique solution to the Poisson equation \eqref{2.Phi} with $w=w^{k-1}$. Then, for sufficiently small $\tau>0$, there exists a solution $w^k\in H^m(\Omega;\R^n)$, $\Phi^k-\Phi_D\in H^1_D(\Omega)\cap L^\infty(\Omega)$ to \eqref{approx1}--\eqref{approx2}. 
\end{lemma}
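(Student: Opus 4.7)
The plan is to apply the Leray--Schauder fixed-point theorem, using the regularizing term $\eps b$ for coercivity in $H^m(\Omega;\R^n)$ and the compact embedding $H^m(\Omega)\hookrightarrow L^\infty(\Omega)$, valid for $m>d/2$, to obtain compactness. Given $\tilde w\in L^\infty(\Omega;\R^n)$ and $\delta\in[0,1]$, I would first invoke the preceding lemma to produce the unique $\Phi=\Phi(\tilde w)\in H^1(\Omega)\cap L^\infty(\Omega)$ solving the Poisson problem \eqref{2.Phi} with $w=\tilde w$; the Stampacchia argument provides an $L^\infty$ bound on $\Phi$ depending only on $\|\tilde w\|_{L^\infty(\Omega)}$ and $\Phi_D$, so that through Lemma~\ref{lem.u} the coefficient $u_i(\tilde w,\Phi)$ is strictly positive and bounded. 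I would then solve the linear auxiliary problem
\begin{align*}
  \eps b(w,\phi)+\sum_{i=1}^n\int_\Omega u_i(\tilde w,\Phi)\na w_i\cdot\na\phi_i\, dx
  = -\frac{\delta}{\tau}\int_\Omega\big(u(\tilde w,\Phi)-u^{k-1}\big)\cdot\phi\, dx
\end{align*}
for all $\phi\in H^m(\Omega;\R^n)$ via Lax--Milgram, the coercivity coming from the generalized Poincar\'e inequality $b(w,w)\ge C\|w\|_{H^m(\Omega)}^2$ together with the non-negativity of the second term on the left. This defines a map $\Psi:L^\infty(\Omega;\R^n)\times[0,1]\to L^\infty(\Omega;\R^n)$ by $\Psi(\tilde w,\delta):=w$.

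The next step is to verify the Leray--Schauder hypotheses. At $\delta=0$ the right-hand side vanishes, so $\Psi(\cdot,0)\equiv 0$. For continuity, if $\tilde w_n\to\tilde w$ in $L^\infty(\Omega;\R^n)$, I would first show $\Phi(\tilde w_n)\to\Phi(\tilde w)$ strongly in $H^1(\Omega)$: subtract the two Poisson equations, test with $\Phi_n-\Phi$, and combine the coercivity of $-\Delta$ with the strict monotonicity of $\Phi\mapsto-\sum z_i u_i(\tilde w,\Phi)$ established in Lemma~\ref{lem.u}, using the local Lipschitz continuity of $u(\cdot,\cdot)$ to handle the $\tilde w_n-\tilde w$ contribution via dominated convergence. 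This yields $u_i(\tilde w_n,\Phi_n)\to u_i(\tilde w,\Phi)$ in every $L^p(\Omega)$; Lax--Milgram stability then gives $w_n\to w$ in $H^m(\Omega;\R^n)$, and the compact embedding $H^m(\Omega)\hookrightarrow L^\infty(\Omega)$ delivers both continuity and compactness of $\Psi(\cdot,\delta)$ on $L^\infty(\Omega;\R^n)$. Joint continuity in $(\tilde w,\delta)$ is immediate since the right-hand side is linear in $\delta$.

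The a priori bound is the decisive step. Suppose $w=\Psi(w,\delta)$, so that $u(\tilde w,\Phi)=u^k:=u(w,\Phi)$. Choosing $\phi=w$ and repeating the manipulations culminating in \eqref{2.aux1} from the proof of Lemma~\ref{lem.ei}, whose only ingredients are convexity of $u\mapsto\sum u_i(\log u_i-1)$, symmetry and positive semidefiniteness of $(a_{ij})$, and the Poisson equation --- none of which depends on the specific value $\delta=1$ --- one arrives at
\begin{align*}
  \delta\bigl(H_{BR}(u^k)-H_{BR}(u^{k-1})\bigr)
  + \tau\sum_{i=1}^n\int_\Omega u_i^k|\na w_i|^2\, dx
  + C\eps\tau\|w\|_{H^m(\Omega)}^2\le 0.
\end{align*}
Since $h_{BR}$ is bounded from below (the quadratic term in $u$ dominates the linear term by Young's inequality), this gives $\|w\|_{H^m(\Omega)}\le C(\eps,\tau,u^{k-1})$ uniformly in $\delta\in[0,1]$, hence a uniform $L^\infty$ bound by Sobolev embedding. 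Leray--Schauder then yields a fixed point $w^k=\Psi(w^k,1)$, and together with $\Phi^k:=\Phi(w^k)$ it solves \eqref{approx1}--\eqref{approx2}. The restriction to sufficiently small $\tau$ enters only through the sign absorption needed to keep the right-hand side controlled by the initial energy $H_{BR}(u^{k-1})$. The main obstacle is the continuity of the implicitly defined Poisson solver $\tilde w\mapsto\Phi(\tilde w)$; the monotonicity identified in Lemma~\ref{lem.u} is precisely what makes this step go through.
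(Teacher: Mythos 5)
Your proposal is correct and follows essentially the same route as the paper: a Leray--Schauder argument in which the Poisson equation is solved first for the frozen entropy variable, the linearized problem is handled by Lax--Milgram with coercivity from the generalized Poincar\'e inequality, compactness comes from the embedding $H^m(\Omega)\hookrightarrow L^\infty(\Omega)$, and the uniform bound on fixed points is extracted by testing with $w$ and invoking the discrete entropy inequality. The only (harmless) deviations are that you place the homotopy parameter $\delta$ only on the right-hand side rather than also on the drift--diffusion term, and that you make explicit the lower bound on $H_{BR}$ and the continuity of the Poisson solver, points the paper leaves implicit or delegates to a citation.
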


\begin{proof}
The idea is to use the Leray--Schauder fixed-point theorem. Let $y\in L^\infty(\Omega;\R^n)$ and $\delta\in[0,1]$. Let $\Phi^k-\Phi_D\in H_D^1(\Omega)\cap L^\infty(\Omega)$ be the unique solution to 
\begin{align*}
  \int_\Omega\na\Phi^k\cdot\na\psi dx
  = \int_\Omega\sum_{i=1}^n z_iu_i(y,\Phi^k)\psi dx
\end{align*}
for test functions $\psi\in H_D^1(\Omega)$, where $u_i(y,\Phi)$ is defined in Lemma \ref{lem.u}. Next, consider the linear problem
\begin{align}\label{2.a}
  a(v,\phi) = F(\phi) \quad\mbox{for all }\phi\in H^m(\Omega;\R^n),
\end{align}
where
\begin{align*}
  a(v,\phi) &= \delta\sum_{i=1}^n\int_\Omega u_i(y,\Phi^k)
  \na v_i\cdot\na\phi_i dx + \eps b(v,\phi), \\ 
  F(\phi) &= -\frac{\delta}{\tau}\int_\Omega
  \big(u(y,\Phi^k)-u(w^{k-1},\Phi^{k-1})\big)\cdot\phi dx,
\end{align*}
recalling definition \eqref{2.b} of $b$. By the generalized Poincar\'e inequality, the bilinear form $a$ is coercive on $H^m(\Omega)$:
\begin{align*}
  a(v,v) = \delta\sum_{i=1}^n\int_\Omega u_i(y,\Phi^k)|\na v_i|^2 dx
  + \eps b(v,v) \ge C\eps\|v\|_{H^m(\Omega)}^2.
\end{align*}
Moreover, $a$ and $F$ are continuous since $u_i(y,\Phi^k)\in L^\infty(\Omega)$. By the Lax--Milgram lemma, there exists a unique $y\in H^m(\Omega;\R^n)\hookrightarrow L^\infty(\Omega;\R^n)$ to \eqref{2.a}.

This defines the fixed-point operator $S:L^\infty(\Omega;\R^n)\times[0,1]\to L^\infty(\Omega;\R^n)$. It holds that $S(y,0)=0$ for all $y\in L^\infty(\Omega;\R^n)$. The continuity of $S$ can be shown as in proof of Lemma 5 in \cite{Jue15}. The compactness of $S$ follows from the compactness of the embedding $H^{m}(\Omega;\R^n)\hookrightarrow L^\infty(\Omega;\R^n)$. It remains to determine a uniform estimate for all fixed points of $S(\cdot,\delta)$. Let $w^k\in H^m(\Omega;\R^n)$ be such a fixed point. If $\delta=0$, there is nothing to show. Hence, let $\delta>0$. Proceeding as in the proof of Lemma \ref{lem.ei}, we obtain the inequality
\begin{align*}
  \delta H_{BR}(u^k) + C\eps\tau\|w^k\|_{H^m(\Omega)}^2
  \le \delta H_{BR}(u^{k-1}) \le H_{BR}(u^{k-1}),
\end{align*}
where $u^k=u(w^k,\Phi^k)$. Choosing $0<\tau<1/(C\sigma)$, this provides a uniform estimate for $w^k$ in $H^m(\Omega;\R^n)$ and consequently in $L^\infty(\Omega;\R^n)$, which is the desired uniform bound. The Leray--Schauder theorem implies the existence of a fixed point of $S(\cdot,1)$, which is a solution to \eqref{approx1}, where $\Phi^k$ solves \eqref{approx2}.
\end{proof}


\subsection{Limit $\eps\to 0$}

We perform first the limit $\eps\to 0$. (The existence of a weak solution can be proved by performing the simultaneous limit $(\eps,\tau)\to 0$, but we need the limit $\eps\to 0$ to show an inequality for the entropy $H_R$.) For this, we fix $k\in\{1,\ldots,n\}$ and set $w_i^\eps:=w_i^k$, $\Phi^\eps:=\Phi^k$, and $u_i^\eps:=u_i^k$. The uniform estimates of Lemma \ref{lem.unif} imply the existence of a subsequence, which is not relabeled, such that
\begin{align*}
  & (u_i^\eps)^{1/2}\rightharpoonup \sqrt{u_i}, \
  u_i^\eps\rightharpoonup u_i \quad\mbox{weakly in }H^1(\Omega), 
  \quad \na\Phi^\eps\rightharpoonup \na\Phi
  \quad\mbox{weakly in }L^2(\Omega), \\
  & u_i^\eps\to u_i \quad\mbox{strongly in }L^2(\Omega), \quad
  \eps w_i^\eps \to 0 \quad\mbox{strongly in }H^m(\Omega)\mbox{ as }
  \eps\to 0
\end{align*}
for $i=1,\ldots,n$. This shows that
\begin{align*}
  (u_i^\eps)^{1/2}\na w_i^\eps
  &= 2\sigma\na(u_i^\eps)^{1/2} 
  + z_i(u_i^\eps)^{1/2}\na\Phi^\eps
  + (u_i^\eps)^{1/2}\sum_{j=1}^n a_{ij}\na u_j^\eps \\
  &\rightharpoonup 2\sigma\na\sqrt{u_i} 
  + z_i\sqrt{u_i}\na\Phi 
  + \sqrt{u_i}\sum_{j=1}^n a_{ij}\na u_j
  = \sqrt{u_i}\na w_i
\end{align*}
weakly in $L^{4/3}(\Omega)$, where we defined
\begin{align*}
  w_i = \sigma\log u_i + z_i\Phi
  + \sum_{j=1}^n a_{ij}u_j \quad\mbox{on }
  \{u_i>0\}, \quad w_i=0 \quad\mbox{on }\{u_i=0\}.
\end{align*}
In fact, in view of the uniform $L^2(\Omega)$ bound for $(u_i^\eps)^{1/2}\na w_i^\eps$ from Lemma \ref{lem.unif}, the weak convergence holds even in $L^2(\Omega)$. Notice that $\na w_i$ may not exist but $\sqrt{u_i}\na w_i$ is a function in $L^2(\Omega)$. Since $u_i\in H^1(\Omega)\hookrightarrow L^4(\Omega)$ for $d\le 4$, we have $u_i\na w_i\in L^{4/3}(\Omega)$. 

The convergences allow us to pass to the limit in the approximate problem \eqref{approx1}--\eqref{approx2}, showing that $u^k:=u\in L^2(\Omega;\R^n)$ and $\Phi^k-\Phi_D:=\Phi-\Phi_D\in H_D^1(\Omega)$ is a solution to 
\begin{align}\label{approx3}
  & \frac{1}{\tau}\int_\Omega(u^k-u^{k-1})\cdot\phi dx
  + \sum_{i=1}^n\int_\Omega u_i^k\na w_i^k\cdot\na\phi_i dx = 0, \\
  & \int_\Omega\na\Phi^k\cdot\na\psi dx = \int_\Omega\sum_{i=1}^n
  z_iu_i^k\psi dx \label{approx4}
\end{align} 
for all $\phi\in W^{1,4}(\Omega;\R^n)$ and $\psi\in H_D^1(\Omega)$. We wish to pass to the limit $\eps\to 0$ in the entropy inequality of Lemma \ref{lem.ei}. For this, we recall that $u_i^\eps\to u_i^k$ strongly in $L^2(\Omega)$, and $\na\Phi^\eps\to\na\Phi^k$, $(u_i^\eps)^{1/2}\na w_i^\eps\rightharpoonup (u_i^k)^{1/2}\na w_i^k$ weakly in $L^2(\Omega)$. Therefore, by the weakly lower semicontinuity of convex continuous functions, the limit $\eps\to 0$ in \eqref{2.eiHBR} leads to
\begin{align}\label{2.eiHBR2}
  H_{BR}(u^k) + \tau\sum_{i=1}^n\int_\Omega u_i^k|\na w_i^k|^2 dx 
  \le H_{BR}(u^{k-1}).
\end{align}

\begin{lemma}\label{lem.HR}
It holds that
\begin{align*}
  H_R(u^k) &+ \tau\sigma\sum_{i,j=1}^n\int_\Omega a_{ij}
  \na u_i^k\cdot\na u_j^k dx 
  + \tau\sum_{i=1}^n\int_\Omega
  u_i^k\big|\na(p_i(u^k)+z_i\Phi^k)\big|^2 dx \\
  &\le H_R(u^{k-1}) 
  - \tau\sigma\sum_{i=1}^n\int_\Omega z_i\na u_i^k
  \cdot\na\Phi^kdx,
\end{align*}
recalling definition \eqref{1.H} of $H_R$.
\end{lemma}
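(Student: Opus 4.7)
The target is the Rao-entropy analog of Lemma \ref{lem.ei}: it is exactly what one obtains by testing the evolution equation \eqref{approx3} with the non-logarithmic part of the entropy variable $\phi_i := p_i(u^k) + z_i\Phi^k$, which is the variational derivative of $h_R$ with respect to $u_i$. My plan is to carry out this test and then reuse, almost verbatim, the tricks from the proof of Lemma \ref{lem.ei}.

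For the time-difference term, I would split $\phi_i = p_i(u^k) + z_i(\Phi^k - \Phi_D) + z_i\Phi_D$ and estimate the three pieces as in Lemma \ref{lem.ei}: the symmetry/semidefiniteness identity $\sum_{i,j}a_{ij}(u_i^k - u_i^{k-1})u_j^k \ge \tfrac12\sum a_{ij}(u_i^k u_j^k - u_i^{k-1}u_j^{k-1})$; the Poisson-equation integration-by-parts
\[
\int\sum_i z_i(u_i^k-u_i^{k-1})(\Phi^k-\Phi_D)\,dx = \int\na(\Phi^k-\Phi^{k-1})\cdot\na(\Phi^k-\Phi_D)\,dx,
\]
whose boundary contributions cancel because $\Phi^k$ and $\Phi^{k-1}$ obey the same mixed conditions, bounded below by $\tfrac12\int|\na(\Phi^k-\Phi_D)|^2 - \tfrac12\int|\na(\Phi^{k-1}-\Phi_D)|^2$ via the discrete parallelogram identity; and the direct identification $\sum_i z_i\int(u_i^k - u_i^{k-1})\Phi_D\,dx$. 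Summed, these give $\ge H_R(u^k) - H_R(u^{k-1})$. For the diffusion term, the expansion $u_i^k\na w_i^k = \sigma\na u_i^k + u_i^k\na\phi_i$ yields
\[
\sum_i\int u_i^k\na w_i^k\cdot\na\phi_i\,dx = \sigma\sum_{i,j}a_{ij}\int\na u_i^k\cdot\na u_j^k\,dx + \sigma\sum_i z_i\int\na u_i^k\cdot\na\Phi^k\,dx + \sum_i\int u_i^k|\na\phi_i|^2\,dx,
\]
and rearrangement produces the stated inequality.

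The main obstacle is admissibility: since $\na u_j^k\in L^2(\Omega)$ only, the function $\phi_i$ lies in $H^1(\Omega)$ but not necessarily in $W^{1,4}(\Omega)$, which is the test-function class of \eqref{approx3}. I would overcome this by performing the test one step earlier, at the regularized problem \eqref{approx1}--\eqref{approx2}, where the test-function space is $H^m(\Omega;\R^n)$ and Assumption (A5) together with the formula $u^\eps = F^{-1}(w^\eps - z\Phi^\eps)$ and the pointwise boundedness coming from $w^\eps \in H^m \hookrightarrow L^\infty$ provide, via elliptic bootstrapping and smoothness of $F^{-1}$ away from $u_i=0$, enough regularity to place $\phi_i^\eps = p_i(u^\eps) + z_i\Phi^\eps$ in $H^m$. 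The resulting identity contains an extra regularization contribution $\eps b(w^\eps,\phi^\eps)$, which I would control via Cauchy--Schwarz $|\eps b(w^\eps,\phi^\eps)|\le (\eps b(w^\eps,w^\eps))^{1/2}(\eps b(\phi^\eps,\phi^\eps))^{1/2}$, using the uniform bound $\eps b(w^\eps,w^\eps)\le C$ from Lemma \ref{lem.ei} together with whatever $\eps$-uniform $H^m$-control on $\phi^\eps$ the bootstrap delivers.

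The limit $\eps\to 0$ is then handled with the convergences collected earlier in this subsection: strong $L^2$ convergence of $u_i^\eps$ takes care of the quadratic-in-$u$ parts of $H_R$ and, through the Poisson equation, upgrades $\Phi^\eps\to\Phi^k$ to strong $H^1$ convergence, so that the right-hand side $\sigma\sum_i z_i\int\na u_i^\eps\cdot\na\Phi^\eps$ passes to its limit; weak $L^2$ convergence of $\na u_i^\eps$ plus convexity yields the weak lower semicontinuity of $\sum_{i,j}a_{ij}\int\na u_i^\eps\cdot\na u_j^\eps$ and $\int|\na(\Phi^\eps - \Phi_D)|^2$; and for the delicate term $\int u_i^\eps|\na\phi_i^\eps|^2$, the identity $(u_i^\eps)^{1/2}\na\phi_i^\eps = (u_i^\eps)^{1/2}\na w_i^\eps - 2\sigma\na(u_i^\eps)^{1/2}$ combined with the weak $L^2$ convergence of both summands and $L^2$-norm lower semicontinuity produces the required liminf inequality.
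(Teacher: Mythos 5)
Your algebraic core — testing with $\phi_i=p_i(u^k)+z_i\Phi^k$, the convexity/symmetry estimate for the steric term, the Poisson integration by parts for the potential term, and the expansion $u_i^k\na w_i^k\cdot\na\phi_i=\sigma\na u_i^k\cdot\na\phi_i+u_i^k|\na\phi_i|^2$ — is exactly the paper's computation. The gap is in how you make the test function admissible. You propose to test at the regularized level \eqref{approx1}, where the test space is $H^m(\Omega;\R^n)$, and then send $\eps\to 0$. Two things go wrong there. First, placing $\phi_i^\eps=p_i(u^\eps)+z_i\Phi^\eps$ in $H^m$ is not available: $\Phi^\eps$ solves a mixed Dirichlet--Neumann problem whose regularity under (A5) is capped at $H^2(\Omega)$ (and generically no better near the interface of $\Gamma_D$ and $\Gamma_N$), while $m>d/2$ forces $m=3$ for $d=4$; so already the admissibility bootstrap fails in part of the admitted range of dimensions. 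Second, and more seriously, your Cauchy--Schwarz control of the extra term $\eps b(w^\eps,\phi^\eps)$ requires $\eps\|\phi^\eps\|_{H^m}^2\to 0$, but no $\eps$-uniform $H^m$ bound on $\phi^\eps$ exists: the only bound on $w^\eps$ from the entropy inequality is $\eps\|w^\eps\|_{H^m}^2\le C/\tau$, i.e.\ $\|w^\eps\|_{H^m}=O(\eps^{-1/2})$, and since $u^\eps=F^{-1}(w^\eps-z\Phi^\eps)$ with $F^{-1}$ containing an exponential, even the $L^\infty$ bound on $u^\eps$ (hence any higher norm of $\phi^\eps$) degenerates as $\eps\to 0$. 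The regularization term has no sign when tested against $\phi\neq w$, so it cannot simply be dropped, and with the available estimates it cannot be shown to vanish.

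The paper deliberately avoids this by passing to the limit $\eps\to 0$ \emph{first}, obtaining \eqref{approx3} in which the regularization term is gone and the test space is $W^{1,4}(\Omega)$, and then enlarging the test space by density: it introduces $V_i^k=\{\psi\in H^1(\Omega):(u_i^k)^{1/2}\na\psi\in L^2(\Omega)\}$, observes that $W^{1,4}(\Omega)$ is dense in $V_i^k$ so that \eqref{approx3} extends to test functions in $V_i^k$, and uses estimate \eqref{2.up} (this is where Assumption (A5) enters) to verify that $p_i(u^k)$ and $\Phi^k$ belong to $V_i^k$. Your $\eps\to 0$ convergence arguments for the individual terms (strong $H^1$ convergence of $\Phi^\eps$ via the Poisson equation, weak lower semicontinuity via $(u_i^\eps)^{1/2}\na\phi_i^\eps=(u_i^\eps)^{1/2}\na w_i^\eps-2\sigma\na(u_i^\eps)^{1/2}$) are sound and would be useful in the paper's route as well, but to repair your proof you should replace the regularized-level testing by the density extension of the test space for \eqref{approx3}.
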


\begin{proof}
We wish to use $\phi_i=p_i(u^k)+z_i\Phi^k$ as a test function in \eqref{approx3} but this function is not an element of $W^{1,4}(\Omega)$. In fact, we can extend the test function space for \eqref{approx3}. Let $V_i^k:=\{\psi\in H^1(\Omega):(u_i^k)^{1/2}\na\psi\in L^2(\Omega)\}$. Then $W^{1,4}(\Omega)\subset V_i^k$ (here we use $u_i^k\in L^2(\Omega)$) and $W^{1,4}(\Omega)$ is dense in $V_i^k$. Therefore, we can replace the test function space $W^{1,4}(\Omega)$ by $V_i^k$. By estimate \eqref{2.up} (here we need Assumption (A5)), $p_i(u^k)$ and $\Phi^k$ are elements of $V_i^k$, such that we can use $\phi_i$ as a test function in \eqref{approx3}. Then
\begin{align*}
  & 0 = I_1+I_2+I_3, \quad\mbox{where} \\
  & I_1 = \sum_{i=1}^n\int_\Omega(u_i^k-u_i^{k-1})p_i(u^k)dx, \quad
  I_2 = \sum_{i=1}^n\int_\Omega z_i(u_i^k-u_i^{k-1})\Phi^k dx, \\
  & I_3 = \tau\sum_{i=1}^n\int_\Omega u_i^k\na w_i^k
  \cdot\na(p_i(u^k)+z_i\Phi^k)dx.
\end{align*}
As in the proof of \eqref{2.aux1}, we find that
\begin{align*}
  I_1 &= \sum_{i,j=1}^n\int_\Omega a_{ij}(u_i^k-u_i^{k-1})u_j^k dx
  \ge \frac12\sum_{i,j=1}^n\int_\Omega a_{ij}(u_i^ku_j^k
  - u_i^{k-1}u_j^{k-1})dx.
\end{align*}
We use the Poisson equation \eqref{approx4} and integrate by parts to estimate the term $I_2$:
\begin{align*}
  I_2 &= \sum_{i=1}^n\int_\Omega(u_i^k-u_i^{k-1})(\Phi^k-\Phi_D)dx
  + \sum_{i=1}^n\int_\Omega z_i(u_i^k-u_i^{k-1})\Phi_D dx \\
  &= -\int_\Omega\Delta(\Phi^k-\Phi^{k-1})(\Phi^k-\Phi_D)dx 
  + \sum_{i=1}^n\int_\Omega z_i(u_i^k-u_i^{k-1})\Phi_D dx \\
  &= \int_\Omega\na\big((\Phi^k-\Phi_D)-(\Phi^{k-1}-\Phi_D)\big)
  \cdot\na(\Phi^k-\Phi_D)dx 
  + \sum_{i=1}^n\int_\Omega z_i(u_i^k-u_i^{k-1})\Phi_D dx \\
  &\ge \bigg(\frac12\int_\Omega|\na(\Phi^k-\Phi_D)|^2dx
  + \sum_{i=1}^n\int_\Omega z_i u_i^k\Phi_D dx\bigg) \\
  &\phantom{xx}- \bigg(\frac12\int_\Omega|\na(\Phi^{k-1}-\Phi_D)|^2dx
  + \sum_{i=1}^n\int_\Omega z_i u_i^{k-1}\Phi_D dx\bigg).
\end{align*}
This shows that $I_1+I_2\ge H_R(u^k)-H_R(u^{k-1})$. Finally, we rewrite $I_3$, decomposing $w_i^k=\sigma\log u_i^k + (p_i(u^k)+z_i\Phi^k)$:
\begin{align*}
  I_3 &= \tau\sum_{i=1}^n\int_\Omega u_i^k\na\big(\sigma\log u_i^k
  + (p_i(u^k)+z_i\Phi^k)\big)\cdot\na\big((p_i(u^k)+z_i\Phi^k)\big)dx \\
  &= \tau\sum_{i=1}^n\int_\Omega
  u_i^k|\na(p_i(u^k)+z_i\Phi^k)|^2 dx
  + \tau\sigma\sum_{i,j=1}^n\int_\Omega 
  a_{ij}\na u_i^k\cdot\na u_j^k dx \\
  &\phantom{xx}+ \tau\sigma\sum_{i=1}^n\int_\Omega
  z_i\na u_i^k\cdot\na\Phi^k dx,
\end{align*}
ending the proof.
\end{proof}


\subsection{Limit $\tau\to 0$}

We introduce the piecewise constant in time functions $u_i^{(\tau)}(x,t)=u_i^k(x)$, $w_i^{(\tau)}(x,t)=w_i^k(x)$, and $\Phi^{(\tau)}(x,t)=\Phi^k(x)$ for $x\in\Omega$ and $t\in((k-1)\tau,k\tau]$ and the shift operator $(\sigma_\tau u^{(\tau)})(\cdot,t)=u^{k-1}$ for $t\in((k-1)\tau,k\tau]$. Then equations \eqref{approx1}--\eqref{approx2} become
\begin{align}\label{tau1}
  & \frac{1}{\tau}\int_0^T\int_\Omega
  \big(u^{(\tau)}-\sigma_\tau u^{(\tau)}\big)\cdot\phi dxdt
  + \sum_{i=1}^n\int_0^T\int_\Omega u_i^{(\tau)}\na w_i^{(\tau)}
  \cdot\na\phi_i dxdt = 0, \\
  & \int_0^T\int_\Omega\na\Phi^{(\tau)}\cdot\na\psi dxdt
  = \int_0^T\int_\Omega\sum_{i=1}^n z_iu_i^{(\tau)}\psi dxdt
  \label{tau2} 
\end{align}
for $\phi_i\in L^2(0,T;W^{1,4}(\Omega))$ and $\psi\in L^2(0,T;H^1_D(\Omega))$. We obtain from Lemma \ref{lem.unif} the uniform estimates
\begin{align}
  \|u_i^{(\tau)}\|_{L^\infty(0,T;L^2(\Omega))}
  + \|u_i^{(\tau)}\|_{L^2(0,T;H^1(\Omega))} 
  + \|(u_i^{(\tau)})^{1/2}\|_{L^2(0,T;H^1(\Omega))} &\le C, \nonumber \\
  \|(u_i^{(\tau)})^{1/2}\na w_i^{(\tau)}\|_{L^2(\Omega_T)}
  + \|\Phi^{(\tau)}\|_{L^\infty(0,T;H^1(\Omega))}
  + \sqrt{\eps}\|w_i^{(\tau)}\|_{L^2(0,T;H^m(\Omega))} &\le C, 
  \nonumber \\
  \|(u_i^{(\tau)})^{1/2}\na p_i(u^{(\tau)})\|_{L^2(\Omega_T)}
  + \|(u_i^{(\tau)})^{1/2}\na\Phi^{(\tau)}\|_{L^2(\Omega_T)}
  &\le C, \label{2.pp}
\end{align}
where $C>0$ depends on $T$ but not on $\eps$ and $\tau$. Estimate \eqref{2.pp} holds under Assumption (A5) and is not needed for the existence analysis but for the derivation of the Rao-type entropy inequality. 

We need a uniform bound for the discrete time derivative. 

\begin{lemma}[Discrete time estimate]\label{lem.time}
There exists a constant $C>0$ independent of $\eps$ and $\tau$ such that
\begin{align*}
  \tau^{-1}\|u_i^{(\tau)}-\sigma_\tau u_i^{(\tau)}\|_{L^{2}(0,T;W^{1,4}(\Omega)')} \le C.
\end{align*}
\end{lemma}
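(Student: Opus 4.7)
My plan is to extract the discrete time derivative directly from the weak formulation \eqref{tau1} by choosing one-component test functions, and then bound the resulting flux term via Hölder's inequality, using exactly the bounds already established in Lemma~\ref{lem.unif}. No new estimate is really required; the task is a clean bookkeeping exercise once the right factorization is identified.

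First, I would fix $i\in\{1,\ldots,n\}$ and, for a scalar test function $\phi_i\in L^2(0,T;W^{1,4}(\Omega))$, use the vector $(0,\ldots,\phi_i,\ldots,0)$ in \eqref{tau1}. This yields
\begin{align*}
  \frac{1}{\tau}\int_0^T\!\!\int_\Omega (u_i^{(\tau)}-\sigma_\tau u_i^{(\tau)})\phi_i\,dxdt
  = -\int_0^T\!\!\int_\Omega u_i^{(\tau)}\na w_i^{(\tau)}\cdot\na\phi_i\,dxdt,
\end{align*}
so by duality (and since $\|\na\phi_i\|_{L^4(\Omega)}\le\|\phi_i\|_{W^{1,4}(\Omega)}$) it suffices to bound the flux $u_i^{(\tau)}\na w_i^{(\tau)}$ in $L^2(0,T;L^{4/3}(\Omega))$ by a constant independent of $\eps$ and $\tau$.

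The key factorization is
\begin{align*}
  u_i^{(\tau)}\na w_i^{(\tau)} = (u_i^{(\tau)})^{1/2}\cdot(u_i^{(\tau)})^{1/2}\na w_i^{(\tau)},
\end{align*}
to which I would apply Hölder's inequality with exponents $4$ and $2$ (noting $\tfrac14+\tfrac12=\tfrac34$), giving
\begin{align*}
  \|u_i^{(\tau)}\na w_i^{(\tau)}\|_{L^{4/3}(\Omega)}
  \le \|(u_i^{(\tau)})^{1/2}\|_{L^{4}(\Omega)}\,
  \|(u_i^{(\tau)})^{1/2}\na w_i^{(\tau)}\|_{L^{2}(\Omega)}.
\end{align*}
The first factor equals $\|u_i^{(\tau)}\|_{L^2(\Omega)}^{1/2}$, which is uniformly bounded in $L^\infty(0,T)$ by the first estimate in \eqref{2.u}; the second factor lies in $L^2(\Omega_T)$ uniformly by the bound on $(u_i^{(\tau)})^{1/2}\na w_i^{(\tau)}$ in Lemma~\ref{lem.unif}. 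Taking the $L^2$ norm in time and using these two bounds gives $\|u_i^{(\tau)}\na w_i^{(\tau)}\|_{L^2(0,T;L^{4/3}(\Omega))}\le C$, and combining with the duality argument above delivers the claim.

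There is no real obstacle here; the only point requiring a bit of care is the choice of Hölder exponents, which must be matched to the duality $L^{4/3}\hookrightarrow W^{1,4}(\Omega)'$ dictated by the test-function space in \eqref{tau1}. The dimensional restriction $d\le 4$ does not play a role in this particular lemma because we only use $u_i\in L^2$ and do not invoke any Sobolev embedding for $u_i$ itself.
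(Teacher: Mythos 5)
Your proposal is correct and follows essentially the same route as the paper: test with $\phi_i$, factor $u_i^{(\tau)}\na w_i^{(\tau)}=(u_i^{(\tau)})^{1/2}\cdot(u_i^{(\tau)})^{1/2}\na w_i^{(\tau)}$, and apply H\"older's inequality using the $L^\infty(0,T;L^2(\Omega))$ bound on $u_i^{(\tau)}$ and the $L^2(\Omega_T)$ bound on $(u_i^{(\tau)})^{1/2}\na w_i^{(\tau)}$ from Lemma~\ref{lem.unif}. The only cosmetic difference is that you first record the flux bound in $L^2(0,T;L^{4/3}(\Omega))$ and then invoke duality, while the paper performs the three-factor H\"older estimate directly against $\na\phi_i$.
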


\begin{proof}
We use $\phi_i\in L^2(0,T;W^{1,4}(\Omega))$ as a test function in \eqref{tau1}:
\begin{align*}
  \frac{1}{\tau}\bigg|\int_0^T&\int_\Omega(u_i^{(\tau)}
  - \sigma_\tau u_i^{(\tau)})\phi_i dxdt\bigg| 
  = \bigg|\int_0^T\int_\Omega u_i^{(\tau)}\na w_i^{(\tau)}
  \cdot\na\phi_i dxdt\bigg| \\
  &\le \|(u_i^{(\tau)})^{1/2}\|_{L^\infty(0,T;L^4(\Omega))}
  \big\|(u_i^{(\tau)})^{1/2}\na w_i^{(\tau)}\big\|_{L^2(\Omega_T)}
  \|\na\phi_i\|_{L^2(0,T;L^4(\Omega))} \\
  &\le C\|\na\phi_i\|_{L^2(0,T;L^4(\Omega))},
\end{align*} 
which finishes the proof.
\end{proof}

We apply the Aubin--Lions lemma in the version of \cite{DrJu12} to conclude from the $L^2(0,T;$ $H^1(\Omega))$ bound for $u^{(\tau)}$ and Lemma \ref{lem.time} that there exists a subsequence, which is not relabeled, such that
\begin{align*}
  u_i^{(\tau)}\to u_i \quad\mbox{strongly in }L^2(\Omega_T)
  \ \mbox{as }\tau\to 0.
\end{align*}
Moreover, we obtain
\begin{align*}
  \tau^{-1}(u_i^{(\tau)}-\sigma_\tau u_i^{(\tau)})
  \rightharpoonup \pa_t u_i &\quad\mbox{weakly in }
  L^{2}(0,T;W^{1,4}(\Omega)'), \\
  u_i^{(\tau)}\rightharpoonup u_i, \quad
  \Phi^{(\tau)}\rightharpoonup\Phi &\quad\mbox{weakly in }
  L^2(0,T;H^1(\Omega)).
\end{align*}
The linearity of $p_i$ implies that
\begin{align*}
  p_i(u^{(\tau)}) \rightharpoonup p_i(u)\quad\mbox{weakly in }
  L^2(0,T;H^1(\Omega)). 
\end{align*}
We infer that
\begin{align*}
  (u_i^{(\tau)}&)^{1/2}\na w_i^{(\tau)}
  = 2\sigma\na(u_i^{(\tau)})^{1/2} 
  + z_i(u_i^{(\tau)})^{1/2}\na\Phi^{(\tau)}
  + (u_i^{(\tau)})^{1/2}\na p_i(u^{(\tau)}) \\
  &\rightharpoonup 2\sigma\na\sqrt{u_i} 
  + z_i\sqrt{u_i}\na\Phi
  + \sqrt{u_i}\na p_i(u) \quad\mbox{weakly in }
  L^{2}(0,T;L^{4/3}(\Omega)).
\end{align*}
In fact, by Lemma \ref{lem.unif}, this convergence holds in $L^2(\Omega_T)$. This implies that
\begin{align*}
  u_i^{(\tau)}\na w_i^{(\tau)} &= (u_i^{(\tau)})^{1/2}\cdot
  (u_i^{(\tau)})^{1/2}\na w_i^{(\tau)} \\
  &\rightharpoonup\sigma\na u_i + z_i u_i\na\Phi + u_i\na p_i(u)
  \quad\mbox{weakly in }L^2(0,T;L^{4/3}(\Omega)).  
\end{align*}

Passing to the limit $\tau\to 0$ in \eqref{tau1}--\eqref{tau2} yields
\begin{align*}
  & \int_0^T\langle\pa_t u_i,\phi_i\rangle dt
  + \int_0^T\int_\Omega u_i\na w_i\cdot\na\phi_i dxdt = 0, \\
  & \int_0^T\int_\Omega\na\Phi\cdot\na\psi dxdt
  = \int_0^T\int_\Omega \sum_{i=1}^n z_iu_i\psi dxdt
\end{align*}
for test functions $\phi_i\in L^{2}(0,T;W^{1,4}(\Omega))$ and $\psi\in L^2(0,T;H_D^1(\Omega))$, where $i=1,\ldots,n$ and $w_i$ is defined by
\begin{align*}
  w_i = \sigma\log u_i + z_i\Phi + p_i(u)
  \quad\mbox{on }\{u_i>0\}, \quad
  w_i = 0 \quad\mbox{on }\{u_i=0\}.
\end{align*}
It can be shown as in Step 3 of the proof of Lemma 5 in \cite{Jue15} that the initial conditions are satisfied in the sense of $W^{1,4}(\Omega)'$.


\subsection{Entropy inequalities}\label{sec.ei}

We sum the entropy inequality \eqref{2.eiHBR2} over $k=1,\ldots,j$ with $j\le N$:
\begin{align*}
  H_{BR}(u^j) + \tau\sum_{k=1}^j\sum_{i=1}^n u_i^k|\na w_i^k|^2 dx
  \le H_{BR}(u^0).
\end{align*}
In terms of the piecewise constant in time function $u^{(\tau)}$, this inequality reads as
\begin{align*}
  H_{BR}(u^{(\tau)}(t)) + \sum_{i=1}^n\int_0^t\int_\Omega 
  u_i^{(\tau)}|\na w_i^{(\tau)}|^2 dxds \le H_{BR}(u^0).
\end{align*}
The strong $L^2(\Omega_T)$ convergence of $(u^{(\tau)})$ as well as the weak $L^2(\Omega_T)$ convergences of $(\na\Phi^{(\tau)})$ and $((u_i^{(\tau)})^{1/2}\na w_i^{(\tau)})$ lead to
\begin{align*}
  H_{BR}(u(t)) + \int_0^t\int_\Omega u_i|\na w_i|^2 dxds 
  \le H_{BR}(u^0).
\end{align*}
This completes the proof of Theorem \ref{thm.ex}.

We wish to pass to the limit $\tau\to 0$ in the entropy inequality of Lemma \ref{lem.HR}, which is needed for the weak--strong uniqueness property. For this, we require Assumption (A5). Summing this inequality over $k=1,\ldots,j$ with $j\le N$ and writing the inequality in terms of $u^{(\tau)}$, we have
\begin{align*}
  H_R&(u^{(\tau)}(t)) - H_R(u^0)
  + \sigma\sum_{i,j=1}^n\int_0^t\int_\Omega a_{ij}
  \na u_i^{(\tau)}\cdot\na u_j^{(\tau)}dxds \\
  &\phantom{xx}+ \sum_{i=1}^n\int_0^t\int_\Omega 
  u_i^{(\tau)}\big|\na(p_i(u^{(\tau)}) + z_i\Phi^{(\tau)})
  \big|^2 dxds \\
  &\le -\sigma\sum_{i=1}^n\int_0^t\int_\Omega 
  z_i \na u_i^{(\tau)}\cdot\na\Phi^{(\tau)}dxds.
\end{align*}
Using similar arguments as before, the limit $\tau\to 0$ gives
\begin{align}\label{2.eiR}
  H_R(u(t)) &+ \sigma\sum_{i,j=1}^n\int_0^t\int_\Omega a_{ij}
  \na u_i\cdot\na u_j dxds + \sum_{i=1}^n\int_0^t\int_\Omega 
  u_i\big|\na(p_i(u)+z_i\Phi)\big|^2 dxds \\
  &\le H_R(u^0) - \sigma\sum_{i=1}^n\int_0^t\int_\Omega 
  z_i\na u_i\cdot\na\Phi dxds. \nonumber
\end{align}

\begin{remark}[Rank-one case]\label{rem.one}\rm
We have assumed that the matrix $(a_{ij})$ is positive definite. One may ask whether we can also treat the case of positive semidefinite matrices. An idea is to use the technique of \cite{BGHP85}, extended in \cite{DrJu20}. We only consider matrices $(a_{ij})$ of rank one, i.e.\ $a_{ij}=a>0$ for $i,j=1,\ldots,n$. We also assume that $\sigma=0$ and $z_i=z_0\in\R$ for $i=1,\ldots,n$. We introduce the change of unknowns $v:=\sum_{i=1}^n u_i$ and $v_i:=u_i/v$ for $i=1,\ldots,n$. Summing equations \eqref{1.eq} over $i=1,\ldots,n$ gives the nonlinear drift-diffusion equation
\begin{equation}\label{pme}
\begin{aligned}
  & \pa_t v = \diver(av\na v + z_0v\na\Phi)
  \quad\mbox{in }\Omega,\ t>0, \\
  & \na v\cdot\nu = 0\quad\mbox{on }\pa\Omega, \quad
  v(\cdot,0) = \sum_{i=1}^n u_i^0\quad\mbox{in }\Omega, 
\end{aligned}
\end{equation}
where the potential $\Phi$ solves $-\Delta\Phi=z_0v$ in $\Omega$ with mixed Dirichlet--Neumann boundary conditions. A computation shows that the relative concentrations satisfy the transport equation
\begin{align*}
  \pa_t v_i = U\cdot\na v_i, \quad\mbox{where } U = \na(av + z_0\Phi).
\end{align*}
The transport equation can be solved by the method of characteristics if the transport velocity $U$ is bounded and $\diver U$ is H\"older continuous. Thus, we need sufficient regularity for problem \eqref{pme}. Because of the porous-medium term in \eqref{pme}, we cannot expect classical solutions in general. However, the solutions are classical (for smooth initial data) if the potential is sufficiently regular and $v$ is strictly positive. There are two issues. First, the mixed boundary conditions generally prevent $\Phi$ to be a classical. This can be resolved by assuming that the Dirichlet and Neumann boundary parts do not meet \cite{Tro87}. Second, the positive lower bound is usually proved by using the Stampacchia truncation method. Unfortunately, this is delicate because of the different boundary conditions for $v$ and $\Phi$. We leave details to future work.
\qed\end{remark}

\begin{remark}[Vanishing linear diffusion]\label{rem.sigma}\rm
The existence result may be proved for $\sigma=0$. In this case, the natural entropy is given by $H_R$, but the associated entropy inequality does not give suitable gradient estimates. We use instead the functional \begin{align*}
  H_1(u) = \int_\Omega\bigg(\sum_{i=1}^n u_i(\log u_i-1)
  + \frac12|\na(\Phi-\Phi_D)|^2 
  + \frac12\sum_{i,j=1}^n a_{ij} u_iu_j
  + \sum_{i=1}^n z_iu_i\Phi_D\bigg)dx.
\end{align*}
In case $\sigma=0$, the entropy variable becomes $w_i=z_i\Phi+p_i(u)$. Then, formally differentiating and applying Young's inequality,
\begin{align*}
  \frac{dH_1}{dt}(u)
  &= -\sum_{i=1}^n\int_\Omega u_i\na(z_i\Phi+p_i(u))
  \cdot\na\big(\log u_i+z_i\Phi+p_i(u)\big)dx \\
  &= -\sum_{i=1}^n\int_\Omega z_i\na u_i\cdot\na\Phi dx
  - \sum_{i,j=1}^n\int_\Omega a_{ij}\na u_i\cdot\na u_j dx \\
  &\phantom{xx}- \sum_{i=1}^n\int_\Omega u_i
  \big|\na(p_i(u)+z_i\Phi)\big|^2 dx \\
  &\le -\frac{\alpha}{2}\sum_{i=1}^n\int_\Omega|\na u_i|^2 dx
  + C\int_\Omega|\na\Phi|^2dx.
\end{align*}
Similarly as in the proof of Theorem \ref{thm.ex}, we arrive at the estimate
\begin{align*}
  \frac{dH_{BR}}{dt}(u) + \frac{\alpha}{2}\sum_{i=1}^n\int_\Omega
  |\na u_i|^2 dx\le C(H_{BR}(u)+1),
\end{align*}
which yields $L^2(0,T;H^1(\Omega))$ bounds for $u_i$ and an $L^\infty(0,T;H^1(\Omega))$ bound for $\Phi$. 

There is still an issue with the inversion of the mapping $(w,\Phi)\mapsto u(w,\Phi)$, since the logarithm is needed to ensure the positivity of $u_i$. This problem can be overcome by adding the term $\delta\log u_i$, i.e.\ $w_i = \delta\log u_i+z_i\Phi + p_i(u)$. Compared to the case $\sigma>0$, we have to pass to the limit $\delta\to 0$; see, e.g., the proof of \cite[Theorem 4]{Jue15} for details. Still, we need to show that the limit $\delta\to 0$ is possible in the modified approximate equations, and we leave details to the reader. 
\qed\end{remark}


\section{Weak--strong uniqueness}\label{sec.wsu}

In this section, we show Theorem \ref{thm.wsu}. First, we rewrite the relative entropy \eqref{1.relH}.

\begin{lemma}\label{lem.relHR}
It holds that
\begin{align*}
  H_R(u|\bar{u}) &= H_R(u) + H_R(\bar{u})
  - \frac12\sum_{i=1}^n\int_\Omega a_{ij}
  (u_i\bar{u}_j + \bar{u}_i u_j)dx \\
  &\phantom{xx}- \int_\Omega\na(\Phi-\Phi_D)
  \cdot\na(\bar\Phi-\Phi_D)dx. \nonumber 
\end{align*}
\end{lemma}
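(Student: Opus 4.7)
The strategy is a direct expansion of the defining formula $H_R(u|\bar u)=H_R(u)-H_R(\bar u)-\int_\Omega h_R'(\bar u)\cdot(u-\bar u)\,dx$, followed by a rearrangement that converts $-H_R(\bar u)-\int_\Omega h_R'(\bar u)\cdot(u-\bar u)\,dx$ into $H_R(\bar u)$ minus the two cross terms appearing on the right-hand side of the claim. Equivalently, the task reduces to verifying the identity
\[
  2H_R(\bar u)+\int_\Omega h_R'(\bar u)\cdot(u-\bar u)\,dx=\frac12\sum_{i,j=1}^n\int_\Omega a_{ij}(u_i\bar u_j+\bar u_iu_j)\,dx+\int_\Omega\na(\Phi-\Phi_D)\cdot\na(\bar\Phi-\Phi_D)\,dx.
\]

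The first key step is to compute the variational derivative $h_R'(\bar u)$ of $H_R$ seen as a functional of $u$, noting that the electric energy depends on $u$ nonlocally via $\Phi=\Phi[u]$. I would perturb $\bar u\to\bar u+\varepsilon v$, let $\phi$ solve $-\Delta\phi=\sum_i z_iv_i$ with $\phi=0$ on $\Gamma_D$ and $\na\phi\cdot\nu=0$ on $\Gamma_N$, and differentiate at $\varepsilon=0$. Using $\Delta\Phi_D=0$ and $\na\Phi_D\cdot\nu=0$ on $\Gamma_N$, together with testing the weak Poisson equation for $\phi$ against $\bar\Phi-\Phi_D\in H_D^1(\Omega)$, the electrostatic contribution becomes $\sum_i z_i\int_\Omega v_i(\bar\Phi-\Phi_D)\,dx$. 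Collecting terms yields $h_R'(\bar u)_i=z_i\bar\Phi+\sum_j a_{ij}\bar u_j$ in the pairing sense, after the $\pm z_i\Phi_D$ contributions cancel.

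The second step is to expand $2H_R(\bar u)+\int_\Omega h_R'(\bar u)\cdot(u-\bar u)\,dx$ and simplify. For the nonlocal electrostatic cross term I would apply the weak form of $-\Delta(\Phi-\bar\Phi)=\sum_i z_i(u_i-\bar u_i)$ tested against the admissible function $\bar\Phi-\Phi_D$, which yields
\[
  \sum_{i=1}^n z_i\int_\Omega(u_i-\bar u_i)(\bar\Phi-\Phi_D)\,dx=\int_\Omega\na(\Phi-\bar\Phi)\cdot\na(\bar\Phi-\Phi_D)\,dx.
\]
Combining this with the $\int_\Omega|\na(\bar\Phi-\Phi_D)|^2\,dx$ contribution coming from $2H_R(\bar u)$ produces exactly the desired $\int_\Omega\na(\Phi-\Phi_D)\cdot\na(\bar\Phi-\Phi_D)\,dx$. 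The quadratic $a_{ij}$-part collapses to $\frac12\sum_{i,j}a_{ij}(u_i\bar u_j+\bar u_iu_j)$ by the symmetry of $(a_{ij})$, and the remaining linear $z_i\Phi_D$ contributions cancel between the two sides.

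The main obstacle is the nonlocality of the electric energy: because $\Phi$ and $\bar\Phi$ are determined by Poisson problems with mixed Dirichlet--Neumann boundary data, one cannot differentiate $h_R$ pointwise in $u$, and every integration by parts requires checking that the boundary integrals vanish. The assumptions $\Delta\Phi_D=0$ and $\na\Phi_D\cdot\nu=0$ on $\Gamma_N$, together with $\bar\Phi-\Phi_D\in H_D^1(\Omega)$, are precisely what is needed for these cancellations. Once these points are handled, the remainder of the argument is routine bilinear algebra.
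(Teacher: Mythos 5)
Your overall strategy coincides with the paper's: expand the Bregman-type definition of $H_R(u|\bar u)$, identify $h_R'(\bar u)_i=z_i\bar\Phi+\sum_j a_{ij}\bar u_j$ (correctly accounting for the nonlocal dependence of the electric energy on $u$ via the linearized Poisson problem), and handle the electrostatic cross term by testing $-\Delta(\Phi-\bar\Phi)=\sum_i z_i(u_i-\bar u_i)$ against $\bar\Phi-\Phi_D\in H_D^1(\Omega)$, using Assumption (A4) to kill the boundary terms. All of these steps are sound and are exactly the manipulations in the paper's proof.

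The gap is your final assertion that ``the remaining linear $z_i\Phi_D$ contributions cancel between the two sides.'' They do not. If you carry out your own reduction, the left-hand side $2H_R(\bar u)+\int_\Omega h_R'(\bar u)\cdot(u-\bar u)\,dx$ contributes $2\sum_i z_i\int_\Omega\bar u_i\Phi_D\,dx$ from $2H_R(\bar u)$ and $\sum_i z_i\int_\Omega\Phi_D(u_i-\bar u_i)\,dx$ from splitting $z_i\bar\Phi=z_i(\bar\Phi-\Phi_D)+z_i\Phi_D$; these add up to $\sum_i z_i\int_\Omega\Phi_D(u_i+\bar u_i)\,dx$, which has no counterpart on the right-hand side. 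The structural reason is that the term $\sum_i z_iu_i\Phi_D$ in $h_R$ is \emph{linear} in $u$, so it drops out of the Bregman divergence $h_R(u)-h_R(\bar u)-h_R'(\bar u)\cdot(u-\bar u)$ entirely, but it survives in the sum $H_R(u)+H_R(\bar u)$. Hence the identity you set out to verify is false as written; it becomes exact only if $H_R(u)$ and $H_R(\bar u)$ on the right are replaced by their purely quadratic parts $\int_\Omega(\frac12|\na(\Phi-\Phi_D)|^2+\frac12\sum_{i,j}a_{ij}u_iu_j)\,dx$ (equivalently, if one subtracts $\sum_i z_i\int_\Omega\Phi_D(u_i+\bar u_i)\,dx$ on the right, or if $\Phi_D=0$). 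To be fair, the lemma's statement carries the same discrepancy: the paper's own displayed computation correctly arrives at the quadratic-part version and only its closing sentence identifies those brackets with $H_R(u)$ and $H_R(\bar u)$. But your write-up asserts the problematic cancellation explicitly as a step of the proof, and that step would fail if you checked it; you should either track the $\Phi_D$-linear terms to their nonzero residual and adjust the claimed identity accordingly, or prove the corrected (quadratic-part) version of the lemma.
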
.

\begin{proof}
By definition \eqref{1.relH} of $H_R(u|\bar{u})$,
\begin{align*}
  H_R(u|\bar{u}) &= \int_\Omega\bigg(\frac12|\na(\Phi-\Phi_D)|^2
  - \frac12|\na(\bar\Phi-\Phi_D)|^2
  + \frac12\sum_{i,j=1}^n a_{ij}(u_iu_j - \bar{u}_i\bar{u}_j) \\
  &\phantom{xx}+ \sum_{i=1}^n z_i(u_i-\bar{u}_i)\Phi_D\bigg)dx
  - \sum_{i=1}^n\int_\Omega
  \bigg(z_i\bar\Phi + \sum_{j=1}^n a_{ij}\bar{u}_j\bigg)
  (u_i-\bar{u}_i) dx \\
  &= \int_\Omega\bigg(\frac12|\na(\Phi-\Phi_D)|^2
  - \frac12|\na(\bar\Phi-\Phi_D)|^2
  + \frac12\sum_{i,j=1}^n a_{ij}(u_iu_j + \bar{u}_i\bar{u}_j)\bigg)dx \\
  &\phantom{xx}- \sum_{i=1}^n\int_\Omega\bigg(z_i(u_i-\bar{u}_i)
  (\bar\Phi-\Phi_D) - \sum_{i,j=1}^n a_{ij}u_i\bar{u}_j 
  \bigg)dx.
\end{align*}
We use the Poisson equation to reformulate the term involving $\bar\Phi-\Phi_D$:
\begin{align*}
  -\sum_{i=1}^n&\int_\Omega z_i(u_i-\bar{u}_i)(\bar\Phi-\Phi_D)dx
  = \int_\Omega\Delta(\Phi-\bar\Phi)(\bar\Phi-\Phi_D)dx \\
  &= \int_\Omega\Delta(\Phi-\Phi_D)(\bar\Phi-\Phi_D)dx
  - \int_\Omega\Delta(\bar\Phi-\Phi_D)(\bar\Phi-\Phi_D)dx \\
  &= -\int_\Omega\na(\Phi-\Phi_D)\cdot\na(\bar\Phi-\Phi_D)dx
  + |\na(\bar\Phi-\Phi_D)|^2 dx.
\end{align*}
Thus, by the symmetry of $(a_{ij})$,
\begin{align*}
  H_R(u|\bar{u}) &= \int_\Omega\bigg(\frac12|\na(\Phi-\Phi_D)|^2
  + \frac12\sum_{i,j=1}^n a_{ij}u_iu_j\bigg)dx \\
  &\phantom{xx}+ \int_\Omega\bigg(\frac12|\na(\bar\Phi-\Phi_D)|^2
  + \frac12\sum_{i,j=1}^n a_{ij}\bar{u}_i\bar{u}_j\bigg) dx \\
  &\phantom{xx}-\int_\Omega\na(\Phi-\Phi_D)\cdot\na(\bar\Phi-\Phi_D)dx
  - \frac12\sum_{i,j=1}^n\int_\Omega a_{ij}
  (u_i\bar{u}_j+\bar{u}_i u_j)dx. 
\end{align*}
Inserting the definitions of $H_R(u)$ and $H_R(\bar{u})$ concludes the proof.
\end{proof}

The solutions $u$ and $\bar{u}$ satisfy the entropy inequality \eqref{2.eiR}. Furthermore, since the mixed terms $u_i\bar{u}_j+\bar{u}_iu_j$ contain the strong solution and $(a_{ij})$ is symmetric, we can compute
\begin{align*}
  -\frac12&\sum_{i,j=1}^n\int_\Omega a_{ij}
  (u_i\bar{u}_j + \bar{u}_i u_j)dx\Big|_{0}^{t}
  = -\sum_{i,j=1}^n\int_0^t a_{ij}\big(\langle \pa_t u_i,\bar{u}_j\rangle
  + \langle\pa_t\bar{u}_i,u_j\rangle\big)ds \\
  &= -\sum_{i=1}^n\int_0^t\big(\langle\pa_t u_i,p_i(\bar{u})\rangle
  + \langle\pa_t\bar{u}_i,p_i(u)\rangle\big)ds \\
  &= \sum_{i=1}^n\int_0^t\int_\Omega\big(u_i\na w_i\cdot\na p_i(\bar{u})
  + \bar{u}_i\na\bar{w}_i\cdot\na p_i(u)\big)dxds.
\end{align*}
where $\bar{w}_i=\log\bar{u}_i+z_i\bar\Phi+p_i(\bar{u})$ on $\{\bar{u}_i>0\}$ and $\bar{w}_i=0$ on $\{\bar{u}_i=0\}$. We insert the definitions of $w_i$ and $\bar{w}_i$:
\begin{align}\label{3.mix1}
  -\frac12\sum_{i,j=1}^n&\int_\Omega a_{ij}
  (u_i\bar{u}_j + \bar{u}_i u_j)dx\Big|_{0}^{t}
  = \sigma\sum_{i,j=1}^n\int_0^t\int_\Omega a_{ij}(
  \na u_i\cdot\na\bar{u}_j + \na\bar{u}_i\cdot\na u_j)dxds \\
  &+ \sum_{i=1}^n\int_0^t\int_\Omega 
  \big(z_iu_i\na\Phi\cdot\na p_i(\bar{u})
  + z_i\bar{u}_i\na\bar\Phi\cdot\na p_i(u) \nonumber \\
  &+ (u_i+\bar{u}_i)\na p_i(u)\cdot\na p_i(\bar{u})\big)dxds. \nonumber
\end{align}
Furthermore, using the Poisson equation,
\begin{align}\label{3.mix2}
  -&\int_\Omega\na(\Phi-\Phi_D)
  \cdot\na(\bar\Phi-\Phi_D)dx\Big|_{0}^{t} \\
  &= -\int_0^t\int_\Omega\big(\na\pa_t\Phi\cdot\na(\bar\Phi-\Phi_D)
  + \na(\Phi-\Phi_D)\cdot\na\pa_t\bar\Phi\big) dxds  \nonumber \\
  &= -\sum_{i=1}^n\int_0^t z_i\big(\langle\pa_t u_i,\bar\Phi
  -\Phi_D\rangle
  + \langle\pa_t\bar{u}_i,\Phi-\Phi_D\rangle\big)ds \nonumber \\
  &= \sum_{i=1}^n\int_0^t\int_\Omega 
  z_i u_i\na w_i\cdot\na(\bar\Phi-\Phi_D)dxds
  + \sum_{i=1}^n\int_0^t\int_\Omega z_i\bar{u}_i\na\bar{w}_i
  \cdot\na(\Phi-\Phi_D)dxds \nonumber \\
  &= \sum_{i=1}^n\int_0^t\int_\Omega z_i\big(\sigma\na u_i
  + u_i\na q_i\big)\cdot\na(\bar\Phi-\Phi_D) dxds \nonumber \\
  &\phantom{xx}
  + \sum_{i=1}^n\int_0^t\int_\Omega z_i\big(\sigma\na\bar{u}_i
  + \bar{u}_i\na \bar{q}_i\big)\cdot\na(\Phi-\Phi_D) dxds, \nonumber 
\end{align}
where we have set $q_i:=p_i(u)+z_i\Phi$ and $\bar{q}_i:=p_i(\bar{u})+z_i\bar\Phi$. We add the inequalities for $H_R(u(t))$ and $H_R(\bar{u}(t))$ as well as the identities \eqref{3.mix1} and \eqref{3.mix2}. Then some terms can be combined and after some computations, we end up with
\begin{align}
  H_R(u&(t)|\bar{u}(t)) - H_R(u^0|\bar{u}^0)
  + \sigma\sum_{i,j=1}^n\int_0^t\int_\Omega a_{ij}\na(u_i-\bar{u}_i)
  \cdot\na(u_j-\bar{u}_j)dxds \nonumber \\
  &\le -\sigma\sum_{i=1}^n\int_0^t\int_\Omega z_i\na(u_i-\bar{u}_i)\cdot
  \na(\Phi-\bar\Phi)dxds \label{3.aux}  \\
  &\phantom{xx}
  - \sum_{i=1}^n\int_0^t\int_\Omega\big\{u_i|\na q_i|^2
  + \bar{u}_i|\na\bar{q}_i|^2 
  - (u_i+\bar{u}_i)\na q_i\cdot\na\bar{q}_i
  \big\}dxds \nonumber \\
  &= -\sigma\sum_{i=1}^n\int_0^t\int_\Omega z_i\na(u_i-\bar{u}_i)\cdot
  \na(\Phi-\bar\Phi)dxds \nonumber \\
  &\phantom{xx}- \sum_{i=1}^n\int_0^t\int_\Omega
  u_i|\na(q_i-\bar{q}_i)|^2 dxds 
  - \sum_{i=1}^n\int_0^t\int_\Omega (u_i-\bar{u}_i)\na\bar{q}_i\cdot
  \na(q_i-\bar{q}_i)dxds.
  \nonumber 
\end{align}
By the positive definiteness, we have
\begin{align*}
  \sigma\sum_{i,j=1}^n\int_0^t\int_\Omega a_{ij}\na(u_i-\bar{u}_i)
  \cdot\na(u_j-\bar{u}_j)dxds 
  \ge \alpha\sigma\sum_{i=1}^n\int_0^t\int_\Omega
  |\na(u_i-\bar{u}_i)|^2 dxds.
\end{align*}
We use Young's inequality for the first term on the right-hand side of \eqref{3.aux}:
\begin{align*}
  - \sigma\sum_{i=1}^n\int_0^t\int_\Omega z_i\na(u_i-\bar{u}_i)\cdot
  \na(\Phi-\bar\Phi)dxds
  &\le \frac{\alpha\sigma}{2}\sum_{i=1}^n\int_0^t\int_\Omega
  |\na(u_i-\bar{u}_i)|^2 dxds \\
  &\phantom{xx}+ C(\alpha)\sigma\int_\Omega |\na(\Phi-\bar\Phi)|^2 dxds.
\end{align*}
The second term on the right-hand side of \eqref{3.aux} is nonpositive and can be neglected. Finally, the last term in \eqref{3.aux} is estimated according to 
\begin{align*}
  -&\sum_{i=1}^n\int_0^t\int_\Omega (u_i-\bar{u}_i)\na\bar{q}_i\cdot
  \na(q_i-\bar{q}_i)dxds \\
  &\le \sum_{i=1}^n\|u_i-\bar{u}_i\|_{L^2(\Omega_T)}
  \|\na\bar{q}_i\|_{L^\infty(\Omega_T)} 
  \big\|\na(p_i(u)-p_i(\bar{u}))
  + z_i\na(\Phi-\bar\Phi)\big\|_{L^2(\Omega)} \\
  &\le C\sum_{i=1}^n\|u_i-\bar{u}_i\|_{L^2(\Omega_T)}
  \|\na\bar{q}_i\|_{L^\infty(\Omega_T)} \big(\|\na(u_i-\bar{u}_i)\|_{L^2(\Omega_T)}
  + \|\na(\Phi-\bar\Phi)\|_{L^2(\Omega_T)}\big) \\
  &\le \frac{\alpha\sigma}{2}\sum_{i=1}^n
  \|\na(u_i-\bar{u}_i)\|_{L^2(\Omega)}^2
  + C\sum_{i=1}^n\|u_i-\bar{u}_i\|_{L^2(\Omega)}^2
  + C\|\na(\Phi-\bar\Phi)\|_{L^2(\Omega)}^2,
\end{align*}
where $C>0$ depends on the $L^\infty(\Omega_T)$ norm of $\na\bar{q}_i$. Adding these estimates and taking into account that
\begin{align*}
  \sum_{i=1}^n\|u_i-\bar{u}_i\|_{L^2(\Omega)}^2
  + \|\na(\Phi-\bar\Phi)\|_{L^2(\Omega)}^2
  \le CH_{R}(u|\bar{u}),
\end{align*}
we conclude from \eqref{3.aux} that 
\begin{align*}
  H_R(u(t)|\bar{u}(t)) = H_R(u(t)|\bar{u}(t)) - H_R(u^0|\bar{u}^0)
  \le C\int_0^t H_R(u|\bar{u})ds,
\end{align*}
where we used the fact that the initial data of the weak and strong solutions coincide by assumption, $u^0=\bar{u}^0$. It follows from Gronwall's inequality that $H(u(t)|\bar{u}(t))=0$ for $t>0$ and consequently $u(t)=\bar{u}(t)$ and $\Phi(t)=\bar\Phi(t)$ in $\Omega$ for $t>0$. This proves Theorem \ref{thm.wsu}. 


\section{Exponential decay}\label{sec.exp}

We prove Theorem \ref{thm.long}. Recall that $\na\Phi\cdot\nu=0$ on $\pa\Omega$ in this proof. Differentiating the relative entropy $H_{BR}(u|u^\infty)$, defined in \eqref{1.Hinfty}, with respect to time and taking into account mass conservation and the fact that $u^\infty$ is constant, we find that 
\begin{align*}
  \frac{d}{dt}H_{BR}(u|u^\infty) = \frac{d}{dt}H_{BR}(u).
\end{align*}
We infer from the entropy inequality \eqref{1.ei} that
\begin{align}
  \frac{d}{dt}H_{BR}(u|u^\infty)
  &\le -\sum_{i=1}^n\int_\Omega u_i|\na w_i|^2 dx \label{4.dHdt} \\
  &= -\sum_{i=1}^n\int_\Omega
  \big(4\sigma^2|\na\sqrt{u_i}|^2 + u_i|\na(p_i(u)+z_i\Phi)|^2\big)dx 
  \nonumber \\
  &\phantom{xx}- 2\sigma\sum_{i=1}^n\int_\Omega
  \big(\na u_i\cdot\na p_i(u) + z_i\na u_i\cdot\na\Phi\big)dx.
  \nonumber 
\end{align}
(Strictly speaking, this inequality does not follow directly from \eqref{1.ei}. In fact, we can derive a similar inequality by replacing $H_{BR}(u^0)$ by $H_{BR}(u(s))$ with $0<s<t$. Then dividing by $s-t$ and passing to the limit $s\to t$ yields \eqref{4.dHdt}.) Taking into account the positive definiteness, we obtain 
\begin{align*}
  -2\sigma\sum_{i=1}^n\int_\Omega \na u_i\cdot\na p_i(u) dx
  = -2\sigma\sum_{i,j=1}^n\int_\Omega a_{ij}\na u_i\cdot\na u_j dx
  \le -2\alpha\sigma\sum_{i=1}^n\int_\Omega|\na u_i|^2 dx.
\end{align*}
It follows from the Poincar\'e--Wirtinger inequality, using $u_i^\infty=\operatorname{meas}(\Omega)^{-1}\int_\Omega u_i dx$ (by mass conservation), that 
\begin{align*}
  -2\sigma\sum_{i=1}^n\int_\Omega \na u_i\cdot\na p_i(u) dx
  &\le -2\alpha\sigma C_P^{-1}\sum_{i=1}^n
  \int_\Omega(u_i-u_i^\infty)^2 dx \\
  &\le -C(a)\sigma\sum_{i,j=1}^n\int_\Omega a_{ij}(u_i-u_i^\infty)
  (u_j-u_j^\infty),
\end{align*}
where $C(a)=2\alpha/(C_P\max_{i,j=1,\ldots,n}|a_{ij}|)$. We use the Poisson equation and the boundary condition $\na\Phi\cdot\nu=0$ on $\pa\Omega$ to rewrite the last term on the right-hand side of \eqref{4.dHdt}:
\begin{align*}
  -2\sigma\sum_{i=1}^n\int_\Omega z_i\na u_i\cdot\na\Phi dx
  &= 2\sigma\int_\Omega\sum_{i=1}^n z_iu_i\Delta\Phi dx
  = -2\sigma\int_\Omega\bigg(\sum_{i=1}^n z_iu_i\bigg)^2 dx \\
  &\le -2\sigma\int_\Omega|\na\Phi|^2 dx,
\end{align*}
where the last step is the usual elliptic estimate. By the logarithmic Sobolev inequality \cite[Rem.~2.6]{Jue16},
\begin{align*}
  -4\sigma^2\sum_{i=1}^n\int_\Omega|\na\sqrt{u_i}|^2 dx
  \le -4C_L\sigma^2\sum_{i=1}^n\int_\Omega u_i\log\frac{u_i}{u_i^\infty}dx.
\end{align*}
We combine the previous estimates to conclude from \eqref{4.dHdt} that
\begin{align*}
  \frac{d}{dt}H_{BR}(u|u^\infty)
  &\le -\sum_{i=1}^n\int_\Omega\bigg(
  4C_L\sigma^2 u_i\log\frac{u_i}{u_i^\infty}
  + 2\sigma|\na \Phi|^2 \\
  &\phantom{xx}+ C(a)\sigma\sum_{i,j=1}^n\int_\Omega 
  a_{ij}(u_i-u_i^\infty)(u_j-u_j^\infty)\bigg)dx \\
  &\le -\sigma\min\{2,4C_L\sigma,C(a)\}H_{BR}(u|u^\infty).
\end{align*}
Applying Gronwall's lemma proves the theorem. Notice that the decay rate vanishes if $\sigma=0$. 


\section{Numerical experiment}\label{sec.num}

We present a numerical result obtained by the software Netgen/NGSolve \cite{Sch14}, where we use mixed Dirichlet--Neumann boundary conditions for the Poisson equation. The domain is the square $\Omega=(0,1)^2$ with Dirichlet conditions on the left and right sides of the square and Neumann conditions on the top and bottom sides. We consider three species. The potential on the left and right sides of the square is defined by $\Phi_D=0.1$. The parameters are chosen as $\sigma=1$, $z_1=z_3=-5$, $z_2=5$, and the matrix
\begin{align*}
  (a_{ij}) = \begin{pmatrix}
  2.5 & 1 & 1 \\ 1 & 1 & 0.5 \\ 1 & 0.5 & 0.5
  \end{pmatrix}.
\end{align*}
is positive definite. We choose the initial data
\begin{align*}
  u_i(x,y) = \exp\big(-100(x-x_i)^2-100(y-y_i)^2\big) + 0.5, \quad
  i=1,2,3,
\end{align*}
where $(x_1,y_1)=(0.25,0.75)$, $(x_2,y_2)=(0.5,0.5)$, and $(x_3,y_3)=(0.75,0.25)$. For the numerical test, we have taken the mesh size 0.05 and the time step size $4\times 10^{-5}$. Figure \ref{fig1} shows the concentrations at the time steps $N=0$, $N=30$, and $N=380$. The last value corresponds to a solution that is close to the equilibrium state. At time step $N=380$, the solution is rather flat in the interior of the domain, while the largest variations can be observed at the (Dirichlet) boundary. This is consistent with the numerical experiments of \cite[Fig.~1A]{Gav18}, where surface charge effects were observed at the boundary. We observe that the shape of the profile depends on the sign of the valence. The electric potential does not change significantly over time, as can be seen in Figure \ref{fig2}. This example shows that the concentrations converge to a (non-constant) steady state as $t\to\infty$ in the case of mixed boundary conditions (which is expected). We note that the software is very sensible in situations where the drift or cross-diffusion terms are dominant such that in these cases a structure-preserving numerical scheme (to ensure the positivity of the densities) needs to be developed. This is left for future work.

\begin{figure}[ht]
\centering
\includegraphics[width=150mm]{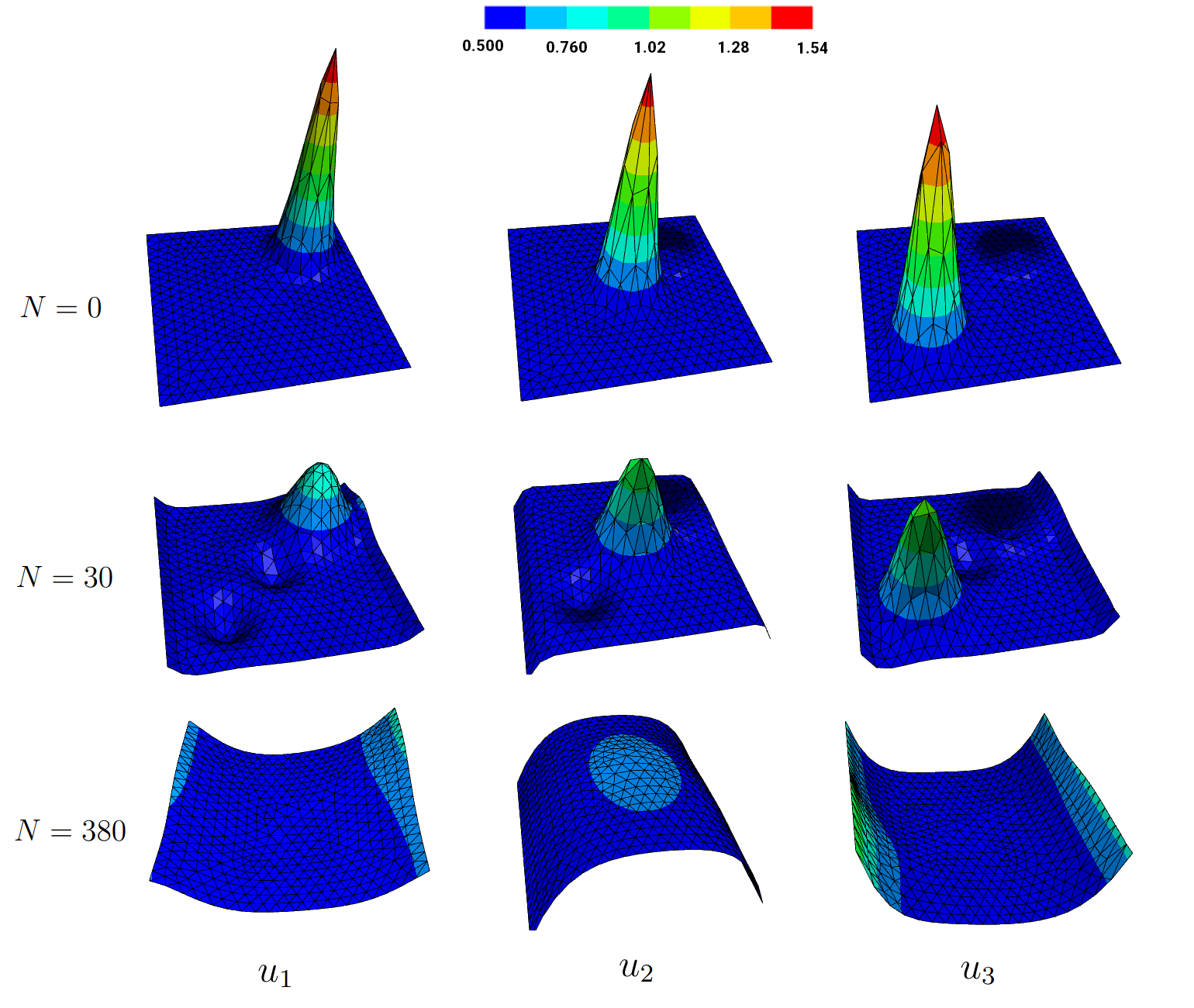}
\caption{Ion concentrations $u_1$ (left column), $u_2$ (middle column), and $u_3$ (right column) at time steps $N=0$ (top row), $N=30$ (middle row), and $N=380$ (bottom row).}
\label{fig1}
\end{figure}

\begin{figure}[ht]
\centering
\includegraphics[width=150mm]{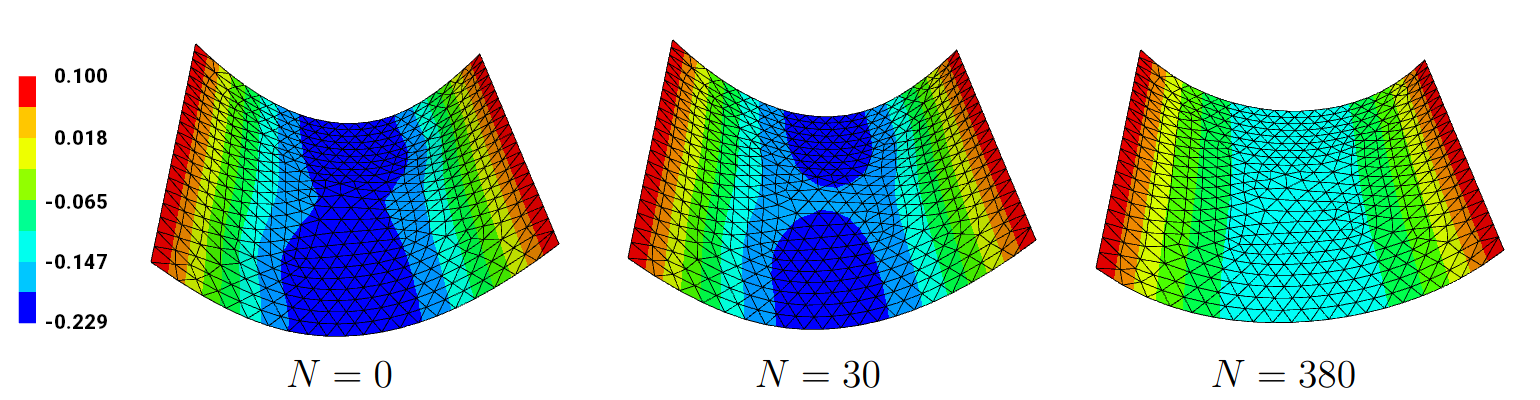}
\caption{Electric potential $\Phi$ at time steps $N=0$ (left), $N=30$ (middle), and $N=380$ (right).}
\label{fig2}
\end{figure}


\end{document}